\newcommand{\R}{{\mathbb R}}
\newcommand{\T}{{\mathcal T}}
\newcommand{\Z}{{\mathbb Z}}
\newcommand{\N}{{\mathbb N}}
\newcommand{\Q}{{\mathbb Q}}
\newcommand{\C}{{\mathbb C}}
\newcommand{\Om}{{\Omega}}
\newcommand{\La}{{\Lambda}}
\newcommand{\ti}{{\mathcal {T}}}
\newtheorem{theorem}{Theorem}[section]
\newtheorem{lemma}[theorem]{Lemma}
\newtheorem{proposition}[theorem]{Proposition}
\begin{document}

\title{On Fuglede's Conjecture for three intervals}

\author{Debashish Bose}
\address{Debashish Bose: Indian Institute of Technology Kanpur, India}
\email{debashishb@wientech.com}

\author{C.P. Anil Kumar}
\address{C.P. Anil Kumar: Infosys, Bangalore, India}
\email{anilkumar$\_$p@infosys.com}

\author{R.  Krishnan}
\address{R. Krishnan: Institute of Mathematical Sciences, Chennai, India}
\email{rkrishnan@imsc.res.in}

\author{Shobha Madan}
\address{Shobha Madan: Indian Institute of Technology Kanpur, India}
\email{madan@iitk.ac.in}
\subjclass[2000]{Primary: 42A99}
\maketitle

\begin{abstract}
In this paper, we first prove the {\it Tiling implies Spectral} part
of Fuglede's cojecture  for the three interval case. Then we prove
the converse {\it Spectral implies Tiling} in the case of three
equal intervals and also in the case where the intervals have
lengths $1/2,\, 1/4,\, 1/4$. Next, we consider  a set $\Omega
\subset \R$, which is a union of $n$ intervals. If $\Om$  is a
spectral set, we prove a structure theorem for the spectrum,
provided the spectrum is contained in some lattice. The method of
this proof has some implications on the {\it Spectral implies
Tiling} part of Fuglede's conjecture for three intervals. In the
final step of the proof, we need a symbolic computation using {\it
Mathematica}. Finally with one additional assumption we can conclude
that the {\it Spectral implies Tiling} holds in this case.

\end{abstract}

\section{\bf{Introduction}}

Let $\Omega$ be a Lebesgue measurable subset of $\R$ with finite
positive measure. For $\lambda \in \R$, let $$e_\lambda(x) =
\frac{1}{|\Om|^{1/2}}e^{2\pi i\lambda x}\chi_\Om(x),\,\,\ x\in \R.$$
\medskip

$\Omega$ is said to be a {\it{spectral set}} if there exists a
subset $\Lambda \subset \R$, such that the set $E_\Lambda =
\{e_\lambda : \lambda \in \Lambda \}$ is an orthonormal basis for
the Hilbert space $ L^2(\Omega)$, and then the pair $(\Omega ,
\Lambda)$ is called a {\it spectral pair.}
\medskip

We say that $\Omega $ as above {\it{tiles}} $\R$ by translations if
there exists a subset $\T \subset \R$, such that the set $\{\Omega +
t : t \in \T\}$, consisting of the translates of $\Omega$ by $\T$,
forms a partition a.e. of $\R$. The pair $(\Omega, \T)$ is called a
{\it tiling pair}. These definitions clearly extend to $\R^d,\,
d>1$.
\medskip

The study of the relationship between tiling and spectral properties
of measurable sets started with a conjecture proposed by Bent
Fuglede in 1974.
\medskip

{\bf {Fuglede's Conjecture}}. Let $\Omega$ be a measurable set in
$\R^d$ with finite positive measure. Then $\Omega$ is spectral if
and only if $\Omega$ tiles $\R^d$ by translations.
\medskip

Fuglede proved this conjecture in $\R^d$ under the additional
assumption that the spectrum, or the tiling set, is a
$d$-dimensional lattice [F]. In recent years there has been a lot of
activity on this problem. It is now known that, in this generality,
the conjecture is false in both directions if the dimension $d\geq
3$ ([T], [M], [KM2]) and ([KM1], [FR], [FMM]).  However, the
conjecture is still open in all dimensions $d \geq 3 $ under the
additional hypothesis that the set $\Omega$ is a convex set. For
convex sets, the conjecture is trivial for $d = 1$, and for $d =2$,
it was proved in [IKT1], [IKT2] and [K]. In dimension $1$, the
problem has been shown to be related to some number-theoretic
questions, which are of independent interest and many partial
results supporting the conjecture are known (see, for example [LW2],
[L2], [K2], [PW]).
\medskip

In this paper, we restrict ourselves to one dimension and to the
case when the set $\Omega$ is a union of three intervals. This work
was inspired by the paper [L1] by I. Laba, where Fuglede's
conjecture is proved for the case that $\Omega$ is a union of two
intervals. We state Laba's Theorem here in order to put the main
result of this paper in perspective.
\medskip

{\bf Theorem [L1].} Let $\Om = [0,\,r) \cup[a,\, a+1-r)$, with $0 <r
\leq 1/2, \,\, a \geq r$. Then the following are equivalent;
\begin{enumerate}
    \item $\Om$ is spectral.
    \item Either (i) $ a-r \in \Z$, or (ii) $r = 1/2, \, a = n/2$ for some $n \in \Z$.
    \item $\Om$ tiles $\R$.
\end{enumerate}
Further, when $\Om$ is spectral, then $\La = \Z$ if 2(i) holds, and
$\La = 2\Z \cup (2\Z + p/n)$ for some odd integer $p$, if 2(ii)
holds.
\medskip

In section 2, we prove that if $\Om$ is a union of three intervals,
then ``Tiling implies Spectral''. This proof, though somewhat long,
uses elementary arguments and some known results. We observe that
the occurrence of certain patterns in the tiling imposes
restrictions on the lengths of the intervals. This allows us to
identify the different cases to be considered, and we prove that
$\Om$ is spectral in each case.
\medskip

In section 3, we consider two particular cases where $\Om = A \cup
B\cup C$ with $A,\,B,\,C$ intervals, and either $|A| = |B| = |C| =
1/3$, or $|A| = 1/2, \,\, |B| = |C| = 1/4$, and prove "Spectral
implies Tiling" in these cases. Here, orthogonality conditions
impose restrictions on the end-points of the intervals, and we use a
powerful theorem on tiling  of integers due to Newman [N] to
conclude that $\Om$ tiles $\R$. (Newman uses the word tesselation
for tiling of integers in his paper).
\medskip

In Section 4 we briefly digress to the case of $n$-intervals, and
obtain information about the spectrum for such spectral sets. We
assume that $\Omega =\cup_{j=1}^n [a_j,a_j +r_j)$, with
$\sum_{j=1}^n r_j =1$ and $a_j + r_j < a_{j+1}$, is spectral with a
spectrum $\La$. Note that if $\La$ is a spectrum for $\Om $, then
any translate of $\La$  is again a spectrum for $\Om$. In this paper
we always assume that
$$0 \in \La \subset \La -\La. $$
Further, by the orthogonality of the set $E_\La$, we have $$0 \in
\La \subset \La -\La  \subset \Z_\Om,$$ where $\Z_\Om$ stands for
the zero set of of the Fourier transform of the indicator function
$\chi_\Om$, along with the point $0$, i.e.,
$$\Z_\Om = \{ \xi \in \R : \widehat{\chi_\Om}(\xi) = 0 \} \cup \{0\}.$$
In our investigation, the geometry of the zero set $\Z_\Om$ will
play an important role, as also a deep theorem due to Landau [L]
regarding the density of sets of interpolation and sets of sampling.
If $(\Om, \La) $ is a spectral pair, then Landau's theorem applies
and says that the asymptotic density $\rho(\La)$ of $\La$ equals $
1/|\Om|$, where the asymptotic density is given by $\rho(\La):=
\lim_{r\rightarrow\infty} \frac{card(\La\cap[-r,r])}{2r}.$
\medskip

In the literature, it is generally assumed that $\La$ is contained
in some lattice $\mathcal L$. Since $\La$ has positive asymptotic
density, by Szemer$\grave{e}$di's theorem [S], $\La$ will contain
arbitrarily long arithmetic progressions (APs). In Theorem 4.3, we
prove that if $\Om$ is a union of $n$ intervals and $\La$ contains
an AP of length $2n$, then $\La$ must contain the complete AP. As a
consequence, we show that $\Om \,\,d-$tiles $\R$, where $d$ is the
common difference of the AP. So we need to search for APs of length
$2n$ in $\La$. In Lemma 4.4, we show that even if $\La -\La$ is just
$\delta$-separated (instead of being contained in a lattice), then
$\La$ will contain APs of arbitrary length.
\medskip

We return to three intervals in Section 5, with the assumption that
$\La - \La$ is $\delta$-separated. It then turns out that either (i)
$\Om/\Z \approx [0,1]$, or (ii)  $\Om/{2\Z} \approx [0,1/2] \cup
[n/2, (n+1)/2]$, for some $n$, or  (iii) the case is that of equal
intervals (not necessarily $3$ equal intervals!). In the first two
cases ``Spectral implies Tiling''  follows from [F] and [L1],
respectively. The third case is rather complex, and assuming that
$\La \subset \mathcal L$, we are lead to questions about vanishing
sums of roots of unity. Then we use symbolic computation using
Mathematica in an attempt to resolve Fuglede's conjecture for three
intervals. With one additional assumption on the spectrum, we show
``Spectral implies Tiling''. The analysis for this computation is
given in Section 6.

\section{\bf{Tiling implies Spectral}}

Let $A,\,B,\,C$ be three disjoint intervals in $\R$. In this section
we prove the following theorem:

\begin{theorem}
Let $\Om = A \cup B \cup C$, $|A|+|B|+|C|=1$.
If $\Om$ tiles $\R$ by translations, then $\Om$ is a spectral set.
\end{theorem}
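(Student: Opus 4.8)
The plan is to reduce the statement, by stages, to a finite list of explicit length–configurations and to settle each one. First I would invoke the known fact that a tiling of $\R$ by a bounded set may be taken periodic, so that $\T = k\Z + F$ with $F \subset [0,k)$ finite; counting translates in one period and using $|\Om|=1$ forces $|F|=k\in\N$, and $\Om\oplus F$ is (a.e.) a fundamental domain of $\R/k\Z$. Writing $\Om=[0,\ell_1)\cup[c,c+\ell_2)\cup[d,d+\ell_3)$ with $\ell_1+\ell_2+\ell_3=1$, the identity $\sum_{t\in F}\chi_\Om(\cdot-t)\equiv 1$ on $\R/k\Z$ forces the jumps of the left side to cancel: since $\chi_\Om$ jumps $+1$ at $0,c,d$ and $-1$ at $\ell_1,c+\ell_2,d+\ell_3$, this is the multiset identity $F+\{0,c,d\}=F+\{\ell_1,c+\ell_2,d+\ell_3\}$ in $\R/k\Z$, equivalently (via characters of $\R/k\Z$) $\widehat{\mu_F}(n)\bigl(1+e^{2\pi i nc/k}+e^{2\pi i nd/k}-e^{2\pi i n\ell_1/k}-e^{2\pi i n(c+\ell_2)/k}-e^{2\pi i n(d+\ell_3)/k}\bigr)=0$ for all $n\in\Z$, together with the tiling relation $\widehat{\chi_\Om}(n)\widehat{\mu_F}(n)=0$ for $n\neq0$.

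\medskip

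Second, I would separate off the degenerate cases and reduce the rest to the integers. If two of the three intervals abut, then $\Om$ is, as a set, a union of at most two intervals, and spectrality follows from the one-interval case or from \L aba's theorem [L1]. If the density count $\rho(\T)=1/|\Om|=1$ is realised by a lattice, then $\T=\Z$, so $\Om$ is $\Z$-congruent to $[0,1)$ and spectrality is Fuglede's lattice theorem [F]. In the remaining case $\Om$ has six distinct breakpoints mod $k$ and $k\ge 2$; here I expect the multiset identity above, analysed as a finite combinatorial matching of the three ``right ends'' against the three ``left ends'' of the translates, to force $c,d$ and the $\ell_j$ to be commensurable with $k$. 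Rescaling then turns $\Om$ into a union $\widetilde\Om$ of three integer intervals tiling $\Z$ (equivalently tiling $\Z_M$ for some $M$), and the same analysis should cut the surviving possibilities down to a short explicit list of length-patterns (equal lengths; lengths proportional to $(2,1,1)$, i.e.\ the $\tfrac12,\tfrac14,\tfrac14$ shape; and a few more).

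\medskip

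Third, I would settle each surviving pattern by exhibiting an orthonormal basis. By the standard fattening lemma --- $e_{n+s/M}$ restricted to $[\omega,\omega+1)$, $\omega\in\widetilde\Om$, equals $e^{2\pi i s\omega/M}\,e^{2\pi i(n+s/M)u}$ since $e^{2\pi i n\omega}=1$ --- it suffices to find a spectrum $S\subset\Z_M$ for $\widetilde\Om$, which then lifts to the spectrum $\tfrac1M S+\Z$ of $\Om=\widetilde\Om+[0,1)$. For the equal-length patterns (after an artificial subdivision, $\widetilde\Om$ becomes a direct sum of a run of unit intervals with an equally spaced set) one writes $S$ down as an arithmetic-progression-type set; for the others one reads a candidate off the complement of the tiling or from Newman's description of tilings of $\Z$ [N], and verifies orthogonality against the zero set of $\widehat{\chi_{\widetilde\Om}}$. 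When an explicit $S$ is not immediate, the three-interval hypothesis keeps $|\widetilde\Om|$ structured enough for known ``tiling implies spectral'' results for subsets of $\Z$ (Coven--Meyerowitz) to apply.

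\medskip

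The main obstacle I expect is twofold. First, the second step: proving that in the non-degenerate case the data must be commensurable --- excluding irrational configurations --- and that the matching conditions leave only the listed length-patterns; this is the bulk of the ``somewhat long'' elementary argument. Second, in the hard patterns one must produce an honest orthonormal \emph{basis}, not merely an orthonormal family of the right density, so after exhibiting $S$ one must still prove completeness, either by a density count together with Landau's theorem [L], or by matching $S-S$ exactly with $\Z_\Om$, or via the Coven--Meyerowitz machinery; a little care is needed to check that the three-interval hypothesis keeps the relevant set sizes inside the range where such results are available.
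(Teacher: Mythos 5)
Your plan breaks down at the commensurability step, and unfortunately it breaks down in exactly the hardest case. It is not true that a non-degenerate tiling (three pairwise non-abutting intervals, tiling set not a lattice) forces the offsets and lengths to be commensurable with the period $k$. Take $r\in(0,1/2)$ irrational and $\Om=[0,1/2)\cup[1,1+r)\cup[3+r,7/2)$. Then $\Om$ tiles $\R$ with the periodic but non-lattice tiling set $\T=\{0,1/2\}+2\Z$: the $A$-pieces tile as in the two-interval tiling of $[0,1/2)\cup[1,3/2)$ by $\T$, and each slot $[1,3/2)+t$, $t\in\T$, is filled by the $B$-piece of the translate $t$ together with the $C$-piece of the translate $t-2$. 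No two intervals abut, $\Om$ tiles by no lattice (mod $1$ it covers $[0,1/2)$ twice and misses $[1/2,1)$), and the lengths $r$ and $1/2-r$ are incommensurable with the period $2$. So no rescaling turns this set into a union of three integer intervals tiling $\Z$, and neither Newman's theorem nor Coven--Meyerowitz is applicable; as written, your proposal has no route to a spectrum for this family, and the ``short explicit list of length-patterns'' you expect does not exist (one length is $1/2$ but the other two are a free parameter).

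This family is precisely Case (2a) of the paper ($|A|=1/2$, $|B|\neq|C|$), which is why the paper's proof is organized differently: an elementary lemma about admissible adjacency patterns ($AA$, $XYX$, etc.) shows that either the tiling pattern is $ABCABC\cdots$, so $\T=\Z$ and [F] applies, or $|A|=1/2$ and the pattern consists of runs of $A$'s separated by $BC$-blocks. In the latter case the tiling only forces $b,\,c-r\in\frac{1}{2}\Z$ (with $r$ arbitrary), and the spectrum is not obtained from an integer tiling but transplanted from Laba's two-interval theorem: one shows $\Om$ and the fattened set $\Om_1=[0,1/2)\cup[b,b+1/2)$ tile with the same tiling set, and then checks that $\La=2\Z\cup(2\Z+1/k_0)$ is orthogonal for $\Om$ because, for $\lambda\in\La-\La$, the integral of $e^{2\pi i\lambda\xi}$ over $\Om$ equals that over $\Om_1$ (using $k_0\mid l$ with $l=c-r-b$), completeness following as in [L1]. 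Only the genuinely rational cases ($1/3,1/3,1/3$ and $1/2,1/4,1/4$) are handled through tilings of $\Z$ via Newman's theorem, in the spirit of your third step. To repair your proposal you would need to add, for the incommensurable family, an argument of this reduction-to-two-intervals type (plus the completeness check); the remaining cases of your outline could then be made to work.
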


For the proof, we adopt the following notation for
convenience:\\Whenever we need to keep track of the intervals $A,\,
B,\, C$ as being part of a translate of $\Om$ by $t$, we will write
$\Om_t = \Om + t = A_t \cup B_t \cup C_t $.
\medskip

We begin with a simple lemma.

\begin{lemma}
Suppose $\Om$ tiles $\R$. Suppose that in some  tiling
by $\Om$, $AA$ occurs, then either $|A|=\frac{1}{2}$ or $|A| = |B| = |C| = \frac{1}{3}$.
\end{lemma}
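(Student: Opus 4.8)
The plan is to convert the hypothesis into a few metric inequalities among $|A|,|B|,|C|$ and then finish with a short case analysis on the position of $A$ inside $\Om$. First, if $AA$ occurs then, scanning the tiling from left to right, there are two abutting $A$-tiles, say $A_t=[y,\,y+|A|)$ belonging to $\Om_t$ and $A_{t'}=[y+|A|,\,y+2|A|)$ belonging to $\Om_{t'}$. Since both are translates of the same subinterval of $\Om$ and $A_{t'}$ is $A_t$ pushed right by $|A|$, we have $t'=t+|A|$, hence $\Om_{t'}=\Om_t+|A|$. As $\Om_t$ and $\Om_{t'}$ are distinct tiles of a tiling they are disjoint up to a null set, so $|\Om\cap(\Om+|A|)|=0$; intersecting with the subinterval $B$ gives $|B\cap(B+|A|)|=0$, forcing $|A|\ge|B|$, and likewise $|A|\ge|C|$. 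In particular $|A|\ge 1/3$, and if $|A|=1/3$ then $|B|+|C|=2/3$ with $|B|,|C|\le 1/3$ forces $|B|=|C|=1/3$, so we are done. Hence from now on assume $|A|>1/3$; it suffices to prove $|A|=1/2$.

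Next I would record a local constraint at the junction. List the three intervals of $\Om$ in left-to-right order, with the two internal gaps. If the interval of $\Om$ immediately to the right of $A$ exists and the gap between them is $g$, then in $\Om_t$ that interval starts at $y+|A|+g$, and disjointness from $A_{t'}=[y+|A|,\,y+2|A|)$ forces $g\ge|A|$; the mirror argument applied to $A_t$ and the interval of $\Om_{t'}$ lying just to the left of $A_{t'}$ shows the internal gap of $\Om$ on the left of $A$ (if it exists) is also $\ge|A|$. Since the reflection of a tiling is again a tiling and the $AA$ pattern is preserved under reflection, we may assume $A$ is either the leftmost or the middle interval of $\Om$.

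In the leftmost case write $\Om=A\cup B\cup C$ with the intervals in that order along $\R$ (renaming $B,C$ if needed) and let $g\ge|A|$ be the gap between $A$ and $B$. The part of $\Om_t$ to the right of $A_t$ consists of the residual gap $[y+2|A|,\,y+|A|+g)$ of length $g-|A|$ followed by $B_t$ starting at $y+|A|+g$, and in the tiling these must be filled by neighbouring tiles. Carrying this bookkeeping one step forward---using $|B|,|C|\le|A|$, the identity $|A|+|B|+|C|=1$, and re-applying the adjacency analysis to the tile forced to sit immediately to the right of $A_{t'}$---one is pushed to $|A|=1/2$ as the only value consistent with all the constraints and with the tiling being a partition. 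The middle case runs along the same lines but is more rigid, since there both internal gaps adjacent to $A$ are $\ge|A|$; again the analysis yields $|A|=1/2$.

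The step I expect to be the real obstacle is exactly this last bookkeeping: one must verify that \emph{every} consistent way of filling the forced gaps of $\Om_t$, $\Om_{t'}$ and their immediate neighbours collapses to $|A|=1/2$, and that the sub-cases (leftmost versus middle; $g=|A|$ versus $g>|A|$; the possible placements of $B$ and $C$) are exhaustive. The cleanest way to close it is probably to iterate the adjacency argument until a periodic block of tiles appears, and then read off $|A|=1/2$ from a length count over one period.
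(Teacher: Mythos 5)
Your first paragraph is sound and matches the paper's opening move: from two abutting copies $A_t$, $A_{t'}$ you correctly get $t'=t+|A|$, hence $\Om_{t'}=\Om_t+|A|$, hence $|A|\ge|B|,|C|$. But from that point on the argument never actually reaches the conclusion. The entire content of the lemma in the non-equal case is the identity $|A|=|B|+|C|$, and your third paragraph only asserts that ``one is pushed to $|A|=1/2$'' by bookkeeping you do not carry out --- you say yourself that this is the real obstacle. Moreover, the route you choose (constraints $g\ge|A|$ on the internal gaps of $\Om$ adjacent to $A$, a case split on whether $A$ is leftmost or middle) produces information about the \emph{positions} $a_j$, not about the \emph{lengths}, and there is no visible mechanism by which iterating it forces a relation among $|A|,|B|,|C|$. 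As written, the proof is incomplete.

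The missing idea is to exploit the translate $\Om_{t'}=\Om_t+|A|$ on the \emph{other} two intervals rather than at the $A_tA_{t'}$ junction. Since $B_{t'}=B_t+|A|$, the gap between $B_t$ and $B_{t'}$ has length exactly $|A|-|B|$ and is sandwiched between two $B$'s; similarly for $C$. One first rules out $BB$ (and $CC$) occurring anywhere in the tiling when the lengths are not all equal: if $BB$ also occurred then $|A|=|B|$, $CC$ could not occur, and the gap of length $|A|-|C|$ between $C_t$ and $C_{t'}$, being positive and smaller than $|A|=|B|$ with no $C$ allowed adjacent to $C_t$, could not be filled. Then, returning to the gap of length $|A|-|B|$ between $B_t$ and $B_{t'}$: no $A$ fits since $|A|>|A|-|B|$; no $B_r$ can lie strictly between $B_t$ and $B_{t'}$, because then $t<r<t'$ would force $A_r$ to lie strictly between the abutting $A_t$ and $A_{t'}$; and since $CC$ is excluded the gap is filled by a single $C$. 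Hence $|A|-|B|=|C|$, i.e.\ $|A|=\tfrac12$. Without some argument of this kind --- one that converts the rigid offset $\Om_{t'}=\Om_t+|A|$ into an exactly fillable gap of length $|A|-|B|$ --- your proposal does not close.
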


\begin{proof}
Suppose not all three intervals are of equal length, and there
exists a set $\mathcal T$ such that $(\Om, \mathcal T)$ is a tiling
pair. If for some $s,t \in \ti$, $A_sA_t$,  occurs, then $|A|\geq
|B|,|C|$.

Suppose $BB$ also occurs, then $|B| \geq |A|$, so $|A|=|B|< 1/2$,
since $|C| > 0$. Now $CC$ cannot also occur (for then
$|A|=|B|=|C|$). Hence there is a gap between $C_s$ and $C_t$ which
equals $ 0< |A| - |C| < |A|,\, |B|$, so it cannot be filled at all.

Hence if $AA$ occurs, neither $BB$ nor $CC$ can also occur in that
tiling. Now the gap between  $B_s$ and $B_t$ is of length $|A| - |B|
< |A|$, and no $B$ can lie in this gap, so it can be filled only by
a $C$. It follows that  $|A|-|B|=|C|$  and so $|A|=\frac{1}{2}$.
\end{proof}

\medskip

This leads us to consider the following cases:
\begin{description}
 \item[Case 1] $|A|,|B|,|C|\neq \frac{1}{2}$ and not all are equal:
        \begin{description}
                \item[(1a)] No two of $|A|,|B|,|C|$ are equal.
            \item[(1b)] $|A|=|B|\neq|C|.$
        \end{description}
\medskip
 \item[Case 2] $|A|= \frac{1}{2}$:
            \begin{description}
                \item[(2a)]$|B|\neq|C|$.
                \item[(2b)]$|B|=|C|=\frac{1}{4}$.
            \end{description}
\medskip
 \item[Case 3] $|A|=|B|=|C|=\frac{1}{3}$.
\end{description}
\bigskip
{\bf Proof of Theorem:}
We will prove that Tiling implies Spectral, in each of the above  cases.
\medskip

{\bf Case(1a)} : $|A|,|B|,|C|$ are all distinct and none equals  $1/2$.

We claim that in this case the tiling pattern
has to be of the form
 $$---ABC|ABC|ABC---$$
 or
 $$---ACB|ACB|ACB---$$
Then $\Omega$ tiles $\R$ by $\Z$, hence is spectral \cite{F}. Since
we already know that $XX$ cannot appear, our claim will be proved if
we show that no three consecutive intervals appear as $XYX$.

Suppose $A_tBA_s$ occurs. No $C$ can appear between $C_t$ and $C_s$,
and the gap between the  $C_t$ and $C_s$ is of length
$|A|+|B|-|C|<|A|+|B|$ and so cannot be filled by $AB$, (nor by $AA$,
nor by $BB$), hence equals either $|A|$ or $|B|$. But then, either
$|B|=|C|$ or $|A|=|C|$, a contradiction.

\medskip

 {\bf Case(1b)}: $|A|=|B|\neq|C|,\,\,|C|\neq 1/2$.

 We already know that none of $AA,BB,CC$ can occur in
 the tiling.
 We show below that $ABA$ and $BAB$ cannot occur:
 Suppose $A_sBA_t$ occurs,  then $C_s$ and $C_t$ are consecutive $C$'s
 and the gap between them
 has length $|A|+ |B|-|C|=2|A|-|C| = 4|A| -1$.
 This gap has to be filled by $A$'s and $B$'s.
 Let $4|A|-1= m|A|$. Clearly $m < 4$, and we check easily that this
 leads to a contradiction ($m=0 \Rightarrow |C|= 1/2$;
 $m= 1 \Rightarrow |A|=|B|=|C|= 1/3$, $m=2 \Rightarrow |A|=|B|= 1/2$
and $m=3  \Rightarrow  |A|=1$). Hence $A_sBA_t$ cannot occur.
\medskip

This means that in any tiling, the gap between two consecutive $C$'s
is filled by at most two of $A$ and $B$. In other words, $C$ is
translated by at most 1; hence also $A$ and $B$. But if none of the
translates is greater than 1, then $CAC,\, CBC,\, BCB, \,$ and $ACA$
are excluded (for an $A_sCA_t$ occurs iff $B_sCB_t$ occurs, and also
$A_sCA_t$ occurs iff either $C_sAC_t$ occurs or $C_sBC_t$ occurs;
but an occurrence of $A_sCA_t$ would imply that $B$ is translated by
$|B|+2|A|+|C| > 1$, which is not possible). Hence, the tiling
pattern must be either
$$---ABCABCABC---$$
or
$$---ACBACBACB---$$
i.e. $\Omega$ tiles $\R$ by $\Z$, and hence is spectral by [F].

\medskip

{\bf Case (2a)} : $|A|= 1/2, \,\,|B|\neq|C|$.

By Lemma 2.2, $BB$ and $CC$ cannot occur in the tiling. Next we show
that none of $BAB,\,ABA,\,CAC,\,ACA$ can occur.

First note that $BAB$ occurs iff $ABA$ occurs: for, if $BAB$ occurs
as, say $B_sA_rB_t$, where $s, r, t \in \T$, then clearly $t-s = |A|
+|B|$. Consider the corresponding tile translates,
$$\Om_s = A_s \cup B_s \cup C_s, \,{\mbox{and }} \, \Om_t = A_t \cup
B_t \cup C_t .$$
Then the gap between $A_t$ and $A_s$ is of length $|B|$. This gap
cannot be filled by $A$, since $|A| > |B|$, nor can it be filled by
$C$, since $|C| \neq |B|$, and two consecutive $C$'s are ruled out
anyway. So this gap can be filled up by a single $B$, resulting in
$A_sBA_t$, an $ABA$ pattern. The proof of the converse is similar.

Now, suppose $A_sBA_t$ occurs. The gap between $C_s$ and $C_t$ is of
length $|A|+|B|-|C|<|A|+|B|$. This gap has to be filled by a single
$A$ or a single $B$; and this would imply $|B|=|C|$ or $|A| = |C| =
1/2$. Similarly  $CAC$ and $ACA$ cannot occur.
\medskip

Next, suppose that there is a string $B_sAA...AAB_t$ in the tiling
with $n$ consecutive $A$'s between $B_s$ and $B_t$. The gap between
$C_s$ and $C_t$ is then of length $n/2+|B|-|C|$ and has to be filled
by $A$'s and $B$'s. But $B$'s can occur just after the  $C_s$ and
just before the
 $C_t$, otherwise $ABA$ will occur. If no $B$ occurs, then $|B|=|C|$ and if
a single B occurs then $|C|=0$ or $1/2$. Hence there
exists a string
$$---C_sBAA---ABC_t--$$
with $m$ consecutive $A$'s with a $B$ on either side, lying between
$C_s$ and $C_t$. Then $\frac{n}{2}+|B|-|C|=2|B|+\frac{m}{2}$, for
some integer $m$, which means $m=n-1$. We can repeat the argument
$n$ times to to get that $BAB$ must occur somewhere, which is not
possible. Similarly $CAA---AC$ does not occur.

Thus the only possibilities are tiling patterns of the form :
\begin{enumerate}
\item $ ---AA---A\left|BC\left|BC---\right|BC\right|AA---ABC---$
\item $ ---AA---A\left|BC\left|BC---\right|BC\right|BA---ACB---$
\end{enumerate}
and $(1\acute)$ ,$(2\acute)$ with $B$ and $C$ interchanged. We show
that $(2)$ and $(2\acute) $ cannot happen. Suppose $(2)$ occurs;
consider the part consisting of a sequence of $n \,\,BS$'s followed
by a $B$ between $A_s$ and $A_t$, i.e. $A_sBC---BCBA_t$ and the
$C_s,\,\,C_t$ corresponding to these $A$'s. The gap is of length
$\frac{n}{2}+\frac{1}{2}+|B|-|C|$ and has to be filled by $A$'s and
$B$'s. But this leads to a contradiction as shown above. So finally
the tiling pattern has to be of the form $(1)$ or $(1\acute )$.

We write $\Om =[0,1/2)\cup[b,b+r)\cup[c,c+1/2-r).$ It is easy to see
that the above tiling pattern (namely $(1)$ or $(1)'$) for $\Om$,
say by a tiling set $\ti$ implies that both $\Om
_1=[0,1/2)\cup[b,b+1/2)$ and $\Om _2=[0,1/2)\cup[c-r,c-r+1/2)$ tile
$\R$ by the same tiling set $\ti$. (Alternatively we may have to
work with the sets $\Om _1'=[0,1/2)\cup[b+r -1/2,b+r), \,\, \Om
_2'=[0,1/2)\cup[c,c+1/2)$). Using the result for two intervals [L1],
this implies that $b=\frac{n}{2},\,c-r=\frac{k}{2}$ with $n,k \in
\Z$ . Then $c-r-b = \frac{k-n}{2}$. Hence $\Om_1 = [0,1/2)\cup[n/2,
n/2+1/2)$. Clearly $n$ is a period for the tiling set $\mathcal T$,
but $n/2$ is not. So if $k_0$ is the period of the tiling then
$k_0|n$, but $k_0 \not| \frac{n}{2}$. Hence if $j $ is the largest
integer such that $2^j|n$, the same is true for $k_0$. Therefore,
$\frac{n}{k_0}\in 2\Z+1$. Also $k_0|k$, and  $\frac{k}{k_0} \in
2\Z+1$ and so $k_0 | l$, where $l = c-r-b = \frac{k-n}{2}$, and $l
\in \Z$.

We show finally that the set $\Lambda=2\Z \cup (2\Z+\frac{1}{k_0})$
is a spectrum for $\Om$ (we have taken $p=\frac{n}{k_0}$ in Laba's
theorem as stated in the Introduction). To check orthogonality, note
that if $\lambda \in \La - \La, \,\, \lambda \neq 0$, then

\begin{eqnarray*}
\int_\Om e^{2\pi i\lambda\xi}d\xi & = & \int_0^{1/2}+\int_b^{b+r} +
\int_c^{c+1/2-r} e^{2\pi i\lambda\xi}d\xi\\
 & = & \int_0^{1/2} e^{2\pi i\lambda\xi}d\xi+\int_b^{b+r} e^{2\pi
 i\lambda\xi}d\xi +
\int_{b+r+l}^{b+1/2+l} e^{2\pi i\lambda\xi}d\xi \\
 & = & \int_0^{1/2} e^{2\pi i\lambda\xi}d\xi + \int_b^{b+r}
 e^{2\pi i\lambda\xi}d\xi
+ e^{2\pi i\lambda l} \int_{b+r}^{b+1/2}
 e^{2\pi i\lambda\xi}d\xi\\
 & = & \int_{\Omega_1} e^{2\pi i \lambda \xi} d\xi  = 0
\end{eqnarray*}
since $k_0|l$ and since $\La$ is   a  spectrum  for $ {\Om_1}$. As
in [L1], it is easy to see that $\La$ is complete. Alternatively,
this follows from [P1] and [LW].
\medskip

{\bf Case 2b} is a special case of 4 equal intervals, and {\bf Case
3} is 3 equal intervals. In each of these cases tiling implies
spectral follows by more general results [L2]. However, these cases
can be handled by using a theorem due to Newman \cite{N}, and we do
this in the next section.
\bigskip

\section{\bf{The Equal Interval Cases}}

In this section, we will use the fact that spectral and tiling
properties of sets are invariant under translations and dilations.
For convenience, we scale the set $\Om$ suitably and prove both
implications of Fuglede's conjecture for the two sets  $\Om_3 = [0,
1] \cup [a, a+ 1] \cup [b, b+ 1]$ and $\Om_4 = [0,2] \cup [a, a+1]
\cup [b, b+1]$.

An essential ingredient of our proofs will be the following theorem
on tiling of integers.
\begin{theorem}\emph{(Newman)}\label{N}
Let $A = \{a_1,a_2,...,a_k\}$ be distinct integers with
$k=p^{\alpha}$, $p$ a prime and $\alpha$ a positive integer. For
each pair $a_i,a_j$, $i\neq j$, let $e_{ij}$ denote the largest
integer so that $p^{e_{ij}}|(a_i-a_j)$, and let $S = \{e_{i,j}: i,j
= 1,2,...,k;\, i \neq j\}$. Then the set $A$ tiles the set of
integers $\Z$ iff the set $S$ has at most $\alpha$ distinct
elements.
\end{theorem}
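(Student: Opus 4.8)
The plan is to translate tiling of $\Z$ into divisibility by cyclotomic polynomials, and to encode $S$ as the set of ``branching levels'' of $A$ in the $p$-adic tree. First I would invoke the classical fact that a finite $A\subset\Z$ tiles $\Z$ if and only if it tiles $\Z/N\Z$ for some $N$ (every tiling of $\Z$ by a finite set is periodic). Writing $A(x)=\sum_{a\in A}x^a$ and, for a period-$N$ complement, $T(x)=\sum_{t\in T,\ 0\le t<N}x^{t}$, the relation $A\oplus T=\Z/N\Z$ is equivalent, via the discrete Fourier transform on $\Z/N\Z$, to $|A|\,|T|=N$ together with $A(\zeta)T(\zeta)=0$ for every $N$-th root of unity $\zeta\neq1$; in particular, for each divisor $d>1$ of $N$ one has $\Phi_d\mid A$ or $\Phi_d\mid T$ in $\Q[x]$, where $\Phi_d$ is the $d$-th cyclotomic polynomial. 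Second, for $k\ge0$ let $m^{(k)}_c$ be the number of $a\in A$ with $a\equiv c\pmod{p^{k}}$, and call $k$ a \emph{branching level} of $A$ if some residue class mod $p^{k}$ carries elements of $A$ lying in at least two classes mod $p^{k+1}$; then $S$ is exactly the set of branching levels, and since the number of occupied classes mod $p^{k}$ can only grow, by a factor of at most $p$ at each branching level, one gets $|A|\le p^{|S|}$, i.e.\ here $|S|\ge\alpha$. A short computation in $\Q[x]/(\Phi_{p^{\gamma}})=\Q(\zeta_{p^{\gamma}})$ (reduce $A(x)$ modulo $\Phi_{p^{\gamma}}$ using $x^{p^{\gamma}}\equiv1$ and $\sum_{j=0}^{p-1}x^{jp^{\gamma-1}}\equiv0$) shows moreover that $\Phi_{p^{\gamma}}\mid A(x)$ if and only if, for every class $c$ mod $p^{\gamma-1}$, the $p$ numbers $m^{(\gamma)}_{c},m^{(\gamma)}_{c+p^{\gamma-1}},\dots,m^{(\gamma)}_{c+(p-1)p^{\gamma-1}}$ are all equal (``level $\gamma-1$ is a full, balanced branching level'').

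For the direction \emph{tiling $\Rightarrow|S|\le\alpha$}, fix $N$ with $A\oplus T=\Z/N\Z$ (so $p^{\alpha}=|A|$ divides $N$; let $\beta$ be the exponent of $p$ in $N$) and assume $0\in A$, $0\in T$. For each $\gamma\in\{1,\dots,\beta\}$ we have $\Phi_{p^{\gamma}}\mid A$ or $\Phi_{p^{\gamma}}\mid T$. Since $\Phi_{p^{\gamma}}(1)=p$ while $\Phi_d(1)$ is a prime or $1$ for every $d\ge2$, factoring $A(x)$ into irreducibles over $\Q$ and comparing the power of $p$ at $x=1$ (where $A(1)=p^{\alpha}$) shows that $A(x)$ has at most $\alpha$ distinct $p$-power cyclotomic divisors; likewise $T(x)$ has at most $\beta-\alpha$ of them. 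As these two families jointly cover $\{1,\dots,\beta\}$, both bounds are attained, so $A(x)$ has exactly $\alpha$ distinct $p$-power cyclotomic divisors $\Phi_{p^{\gamma_{1}}},\dots,\Phi_{p^{\gamma_{\alpha}}}$. By the last preliminary, each level $\gamma_{i}-1$ is a full balanced branching level, so the number of occupied classes is multiplied by exactly $p$ there; since $|A|=p^{\alpha}$ is the product of the per-level growth factors and every factor is $\ge1$, there can be no other branching level. Hence $S=\{\gamma_{1}-1,\dots,\gamma_{\alpha}-1\}$ and $|S|=\alpha$.

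For the direction \emph{$|S|\le\alpha\Rightarrow$ tiling}, from $|S|\ge\alpha$ we get $|S|=\alpha$, and then equality in $|A|\le p^{|S|}$ forces the \emph{rigid} structure: at each branching level every occupied class splits into all $p$ children with equal multiplicity. I would produce a tiling complement by induction on $\alpha$. After translating so $0\in A$ and dividing out the largest common power of $p$ (so $\min S=0$), $A$ meets all $p$ classes mod $p$, each in $p^{\alpha-1}$ points, and for each residue $i$ the scaled piece $B_i=(A_i-i)/p$ (where $A_i$ is the $i$-th class) is again rigid, with valuation set $\{e_{2}-1<\dots<e_{\alpha}-1\}$ if $S=\{0=e_{1}<e_{2}<\dots<e_{\alpha}\}$. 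By induction each $B_i$ tiles $\Z$, and --- the key point --- the complement delivered by the induction depends only on the valuation set, hence is one and the same set $T'$ for every $i$; thus $A_i\oplus pT'=i+p\Z$ for all $i$, and summing over $i$ gives $A\oplus pT'=\Z$. (Equivalently, one checks that $\Phi_{p^{e_i+1}}$ for $e_i\in S$ are exactly the $p$-power cyclotomic factors of $A(x)$, verifying condition (T1) of Coven--Meyerowitz, whose ``easy'' direction then produces a tiling.)

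The periodicity reduction and the cyclotomic bookkeeping are routine. The genuinely delicate points, which I expect to be the main obstacles, are: establishing the local characterization of when $\Phi_{p^{\gamma}}\mid A(x)$ in terms of the counts $m^{(\gamma)}_c$; and, in the converse direction, verifying that the inductively built complement is really independent of the branch $i$ --- it is precisely here that the rigidity forced by $|S|=\alpha$ must be exploited, and keeping the chain of scalings $p,\ p^{e_{1}},\dots$ consistent is the step most prone to slips.
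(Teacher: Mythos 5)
The paper does not prove this theorem: it is quoted from Newman [N] and used as a black box, so there is no internal proof to measure your argument against. Taken on its own terms, your proposal is correct and essentially complete, and it proceeds by the mask-polynomial/cyclotomic route --- in effect the prime-power case of the Coven--Meyerowitz machinery --- rather than reproducing Newman's original, more hands-on treatment of the generating polynomial. The three pillars are all sound: the identification of $S$ with the set of branching levels of $A$ in the $p$-adic tree (which yields the unconditional bound $|A|\le p^{|S|}$, i.e.\ $|S|\ge\alpha$); the local criterion that $\Phi_{p^\gamma}\mid A(x)$ iff the multiplicities $m^{(\gamma)}_c$ are constant on each fibre over a class mod $p^{\gamma-1}$ (reduce mod $x^{p^\gamma}-1$ and divide by $\Phi_{p^\gamma}=\sum_{j=0}^{p-1}x^{jp^{\gamma-1}}$, comparing coefficients); and the count of distinct $p$-power cyclotomic divisors of $A$ and $T$ against $A(1)=p^\alpha$ and $T(1)=N/p^\alpha$, which forces exactly $\alpha$ of them to divide $A(x)$ and hence $|S|=\alpha$. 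Two points you flag yourself need one extra line each, and both survive scrutiny. First, equality in $|A|\le p^{|S|}$ immediately gives only that every occupied class acquires all $p$ children at each branching level; the equal-multiplicity claim then follows because each occupied class just below the $i$-th branching level must contain exactly $p^{\alpha-i}$ elements of $A$ (it splits $p$-fold at each of the remaining $\alpha-i$ levels and ends in singletons). Second, ``the complement depends only on the valuation set'' is not a consequence of ``each $B_i$ tiles''; it must be built into the induction hypothesis --- prove that every rigid set with valuation set $S'$ tiles $\Z$ with one fixed complement $T_{S'}$ depending only on $S'$. With that formulation the scaling step ($\min S>0$) and the splitting step ($\min S=0$) both visibly preserve the property, the $B_i$ all receive the same $T_{S'}$ with $S'=\{e_2-1,\dots,e_\alpha-1\}$, and $A\oplus pT_{S'}=\Z$ follows. (Equivalently, as you note, one can verify condition (T1) directly and invoke the explicit Coven--Meyerowitz complement.)
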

\medskip

\subsection{ Tiling implies Spectral.} Without loss of generality
we may assume that $0 \in \mathcal T$, where  $\mathcal T$ is the
tiling set. Then in the cases under consideration, the end-points of
the intervals will be integers and we are in the context of tiling
of $\Z$.
\medskip

{\bf Case 3:} We apply Newman's Theorem to the set $A = \{
0,\,a,\,b\}$,  $p=3,\,\alpha =1$, hence $S$ must be a singleton. If
$a = 3^j n$ and $b = 3^k m$ with $n,\,m$ not divisible by $3$, we
see that $ \{j,\,k\} \subset S$. It follows that $j = k$ and  $m-n$
is not divisible by $3$. We may write $a = 3^j(3r+1)$ and $b=
3^j(3s+2); \,\, r, s \in \Z$. But then, we can check easily that the
set $\La = \Z \cup (\Z + \frac{1}{3^{j+1}}) \cup (\Z +
\frac{2}{3^{j+1}})$ is a spectrum for $\Om_3$.

To check completeness, suppose that $f \in L^2(\Om)$ satisfies
$\left\langle f, e^{2\pi i \lambda .}\right\rangle = 0, \,\,\,
\forall \lambda \in \La$. With $\omega = e^{2\pi i/3}$, define
$1-$periodic functions by $ f_1^\#, \, f_2^\#, \, f_3^\#$ by

\begin{equation}
\left(\begin{array}{c}
f_1^\#(x)\\f_2^\#(x)\\f_3^\#(x)\\
\end{array} \right) =
\left(\begin{array}{ccccc}
1 & 1  & 1 \\
1 & \omega & \omega^2\\
1 &\omega^2 & \omega\\
\end{array}\right)
\left(\begin{array}{c}
f(x)\\f(x+ a)\\f(x+b)\\
\end{array}\right)
\end{equation}
where $x \in [0, 1]$.

Then check that $\left\langle f, e^{2\pi i \lambda .}\right\rangle =
0, \,\,\forall \lambda \in \La \,\, \Longleftrightarrow
\,\,\left\langle f_j^\#, e^{2\pi i k .}\right\rangle = 0,
\,\,\forall k \in \Z, \,\,\, j=1,2,3 \,\,\Longleftrightarrow \,\,
f_j^\# = 0 ,\,\, j=1,2,3 \,\, \Longleftrightarrow \,\,f = 0$.
\medskip

{\bf Case 2b:} This time we use Newman's theorem for the set $A =
\{0,1, a, b\}$, $p = 2,\, \alpha =2$, hence $S$ can have at most $2$
elements. Let $a = 2^j(2n+1), \, b = 2^k(2m+1)$.
Then the following cases arise:\\
1. If $j\neq 0,\, k\neq 0$, $\{0,j,k\} \subset S$. Hence $j=k$,
but then $a-b= 2^j.2(n-m)$, so $j+1 \in S$, and the set $S$ still
has cardinality $\geq 3$, therefore $A$ cannot tile $\Z$.\\
2. Suppose that $j=0, \, k\neq 0$. Then $A = \{0,1, a = 2n +1,\,
 b = 2^k(2m+1)\}$. If $n = 2^{l-1}r $ with $l \geq 1 $ and  $r$
 an odd integer, then $S = \{0, k, l\}$. Since $S$ has at most
 $2$ elements, we get $k = l$ in this case. \\
3. Let $j,\,k=0$. We see then that $S$ has at least three distinct
$e_{i,j}$'s, so $A$ cannot tile $\Z$.

We conclude that if $\Om_4$ tiles, then $a = 2^l r +1,\,\, b= 2^l
s$, where $r,s \in 2\Z+1$ and then it is easy to see that the set
$\La = \Z \cup \Z + \frac{1}{2^{l+1}}$ is a spectrum. This completes
the proof of the ``Tiling implies Spectral'' part of Fuglede's
conjecture for $3$ intervals.

\medskip

\subsection {Spectral implies Tiling:}
We now proceed to prove the converse for the two cases $\Om_3$ and $\Om_4$.

In the first case, the proof is long, but we are able to construct
the spectrum as a subset of the zero set of $\widehat{\chi_\Om}$,
using only the fact $\rho(\La) = |\Om|$. In the process we get
information on the end-points of the three intervals, so that
Newman's theorem can be applied to conclude tiling.

{\bf Case 3:} Let $\Omega_3 = [0, 1]\cup [a,a+1]\cup[b,b+1]$.
Without loss of generality, let  $ 0 \in \Lambda \subset
\Lambda-\Lambda \subset \Z_\Omega$, then
$$2 \pi i \lambda  (\widehat{\chi_\Omega}(\lambda)) = e^{2 \pi i \lambda} -1 +
 e^{2 \pi i \lambda (a+1)}-e^{2 \pi i \lambda a} +
 e^{2 \pi i \lambda (b+1)} - e^{ 2 \pi i \lambda b}$$
$$ = ( e^{2 \pi i \lambda}-1)
(1+ e^{2 \pi i \lambda a} + e^{2 \pi i \lambda b})$$ Hence, if
$\lambda \in \Z_\Omega$, then either  $ \lambda \in \Z \mbox {  or }
1+ e^{2 \pi i \lambda a} + e^{2 \pi i \lambda b}= 0.$

Let $$\Z_\Omega^1=\{\lambda: e^{2 \pi i \lambda a}
=\omega,\, e^{2 \pi i \lambda b}=\omega^2\}, $$
$$\Z_\Omega^2=\{\lambda: e^{2 \pi i \lambda a} =
\omega^2,\, e^{2 \pi i \lambda b}=\omega\}. $$
\medskip
We collect some easy facts in the following lemma:\\
\medskip
{\bf Lemma.} Let $\lambda_1 \in \Z_\Omega^1,\,\, \lambda_2 \in
\Z_\Omega^2$, then the following hold:
\begin{enumerate}
\item $-\lambda_1,\,\, 2\lambda_1  \in \Z_\Om^2$ and  $-\lambda_2,\,\,
2\lambda_2  \in \Z_\Om^1$.
\item $\lambda_1 - \lambda_2 \in \Z_\Om^2$  and  $-\lambda_1 +
\lambda_2 \in \Z_\Om^1$.
\item Let $\alpha \in \Z_\Om^1 \cup \Z_\Om^2$, then $\lambda_1 +
3\alpha\Z \subset \Z_\Om^1,\,\,\, \lambda_2 + 3\alpha\Z \subset \Z_\Om^2$.
\item If $\alpha_\circ $ is the smallest positive real
number in $\Z_\Om^1 \cup \Z_\Om^2$, say $\alpha_\circ \in \Z_\Om^1$,
then  $\Z_\Om^1 = \alpha_\circ + 3\alpha_\circ \Z$  and  $\Z_\Om^2 =
2\alpha_\circ + 3\alpha_\circ \Z$.
\end{enumerate}
\begin{proof}
It is easy to verify (1), (2) and (3). We need to prove (4): First
note that since $\widehat{\chi_\Om}(0) = 3$, so  $\widehat{\chi_\Om}
> 0$ in a neighbourhood of $0$, and there exists a smallest positive
real number, say  $\alpha_\circ \in \Z_\Om^1 \cup \Z_\Om^2$. We
assume that $\alpha_\circ \in \Z_\Om^1 $. By (3), we only need to
prove that $\Z_\Om^1 \subset \alpha_\circ + 3\alpha_\circ\Z$.
Suppose not, then there exists $\beta \in \Z_\Om^1$,  such that
$\beta \notin \alpha_\circ + 3\alpha_\circ \Z$. Now by $(3),\,
\beta+3\alpha_\circ \in \Z_\Om^1$,  so we may assume that $\beta \in
(\alpha_\circ, 3\alpha_\circ]$. But $ 2\alpha_\circ, 3\alpha_\circ
\notin \Z_\Om^1$ and, hence the only possibility is $\beta \in
(\alpha_\circ, 2\alpha_\circ)\cup (2\alpha_\circ, 3\alpha_\circ)$.
Now in case $\beta \in (\alpha_\circ, 2\alpha_\circ)$, we use (1)
and (2) to get $2\alpha_\circ - \beta \in \Z_\Om^1$.  But $0 <
2\alpha_\circ - \beta < \alpha_\circ$, which contradicts the
minimality of $\alpha_\circ$. In the other case, that $\beta \in
(2\alpha_\circ, 3\alpha_\circ)$, by (1) and (3) we get
$3\alpha_\circ -\beta \in \Z_\Om^2$. But then $0 < 3\alpha_\circ
-\beta < \alpha_\circ$, again a contradiction. Hence $\Z_\Om^1 =
\alpha_\circ + 3\alpha_\circ \Z$. By similar arguments,  we get
$\Z_\Om^2 = 2\alpha_\circ + 3\alpha_\circ \Z.$
\end{proof}


Therefore,
$$ \Z_{\Omega} = \Z \cup \Z_\Om^1 \cup \Z_\Om^2
 = \Z \cup (\alpha_\circ + 3 \alpha_\circ \Z) \cup (2\alpha_\circ
+ 3 \alpha_\circ \Z).$$ Note that if $\lambda_1, \lambda_2 \in \Z$
then $e_{\lambda_1},\, e_{\lambda_2}$ are mutually orthogonal. But
$\La$ must have upper asymptotic density $3$, by Landau's density
theorem, hence $\rho(\La \setminus \Z) = 2$. In other words
$\rho(\La \cap (\Z_\Om^1 \cup \Z_\Om^2)) \geq 2$.

Our next step is to actually find all of $\La$ as a subset of $\Z_\Om$.

Suppose first $\La \cap \Z_\Om^1 \neq \emptyset$ and that
$\lambda_1$ is the smallest positive element of this set. Now if $
\lambda_1' \in \La \cap \Z_\Om^1$, then $\lambda_1 - \lambda_1 ' \in
\La - \La \subset \Z_\Om$, and by the definition of $\Z_\Om^1$ and
$\Z_\Om^2$, we see that
$$e^{2\pi i (\lambda_1 -\lambda_1')a} = 1 = e^{2\pi i (\lambda_1 -\lambda_1')b}$$
This means that $\lambda_1 -\lambda_1' \in \Z$. Therefore $\La \cap
\Z_\Om^1  \subset \lambda_1 + \Z$. A similar argument shows that
$\La \cap \Z_\Om^2 \subset \lambda_2 + \Z$, where $\lambda_2 \in
\La$. Therefore,
$$\rho(\La \cap \Z_\Om^1) \leq 1, \hspace{.5cm} \rho(\La \cap \Z_\Om^2) \leq 1$$
With this bound on the density, not only $\La \cap\Z_\Om^1 \neq
\emptyset$ and $\La \cap \Z_\Om^2 \neq \emptyset$, but each of these
sets must contribute a density $1$ to $\La$.  Therefore, from the
set $\lambda_1 + \Z$, two consecutive elements, say $\lambda_1 + n$
and $\lambda_1 + (n+1)$ must lie in $\La \cap \Z_\Om^1$, so their
difference, namely $1$, satisfies
$$e^{2\pi i a} = 1 = e^{2\pi i b}$$
and so $a,\,b \in \Z$, and
\begin{eqnarray*}
\lambda_1 + \Z & \subset & \Z_\Om^1  =  \alpha_\circ +3\alpha_\circ \Z \\
\lambda_2 +\Z & \subset & \Z_\Om^2  =  2\alpha_\circ +3\alpha_\circ \Z
\end{eqnarray*}
Finally we see that $\alpha_\circ \in \Q$, since $e^{2\pi i \alpha_\circ a} = \omega$, and $a \in \Z$. Putting $\alpha_\circ = p/q$ and $\lambda_1 = \alpha_\circ +3\alpha_\circ k_1, \,\, k_1 \in \Z$, we see easily that $\Z \subset \frac{p}{q}(3\Z)$, which means $p=1$ and $q = 3q_1$ for some $q_1 \in \Z$. But then
$$a = (3n+1)q_1, \hspace{.5cm} b = (3m +2)q_1. $$
We conclude that $\Om_3$ tiles, by applying Newman's Theorem to the
set $A = \{ 0, \, (3n+1)q,\, (3m +2)q \}, \,\, p=3, \, \alpha = 1.$
\medskip


{\bf Case 2b:} In the case $\Om_4$, we use a theorem of [JP], to
conclude that if $\Om_4$ is spectral, then the end-points of the
intervals lie on $\Z$. For $\Om_3$, we deduced this without invoking
this theorem.

 The spectrum $\La$ will have density $= 4$, by Landau's Theorem.
 It follows from Corollary 2.4.1 of [JP] that $a,b \in \Z$, so that
 $\Om_4 = [0,2]\cup [M,M+1] \cup [N,N+1]$ with $M,\,N \in \Z$, and so
$$2 \pi i \lambda (\widehat{\chi_\Om}(\lambda)) = (e^{2\pi i\lambda}-1)(1+ e^{2\pi i\lambda} +  e^{2\pi i\lambda M} + e^{2\pi i\lambda N}).$$

Hence
$$\Z_\Om = \Z  \cup \Z_\Om^1 \cup \Z_\Om^2 \cup \Z_\Om^3,$$
where
$$\Z_\Om^1 = \{\lambda : 1+ e^{2 \pi i \lambda } = 0, \,e^{2\pi i \lambda N}+ e^{2 \pi i \lambda M}=0 \},$$
$$\Z_\Om^2 = \{\lambda : 1+ e^{2 \pi i \lambda N} = 0,\, e^{2\pi i \lambda M}+ e^{2 \pi i
\lambda} =0 \},$$
$$\Z_\Om^3 = \{\lambda : 1+ e^{2 \pi i \lambda M} =0,\, e^{2\pi i \lambda N}+ e^{2 \pi i \lambda }=0 \}.$$

We collect some facts, which are easy to verify:
\begin{enumerate}
\item If $\lambda \in \Z_\Om^i$, then $\lambda + \Z \subset \Z_\Om^i, \,\, i = 1,2,3.$ Further if $\lambda \in \La$, then $\lambda + \Z \subset \La$, i.e. $\La$ is 1-periodic.
\item If $\lambda \in \Z_\Om^i$, then $ - \lambda \subset \Z_\Om^i, \,\, i = 1,2,3.$
\item If $\Z_\Om^1 \neq \emptyset$, then $M-N$ is an odd integer, and $\Z_\Om^1 = \Z + 1/2$. Without loss of generality, we assume that $N = 2n+1$ and $M = 2m$. Thus, in this case $\Z_\Om^1 \subset \Z_\Om^2$.
\item If $\lambda, \, \lambda' \in  \Z_\Om^2$ are such that $\lambda -\lambda' \in \Z_\Om$, then $\lambda -\lambda' \in \Z$.
\item If $\lambda, \, \lambda' \in  \Z_\Om^3$ are such that $\lambda -\lambda' \in \Z_\Om$, then $\lambda -\lambda' \in \Z \cup \Z_\Om^1$.
\item If $\lambda  \in  \Z_\Om^3$ and $\, \lambda' \in \Z_\Om^2$ are such that $\lambda -\lambda' \in \Z_\Om$, then $\lambda -\lambda' \in \Z_\Om^3$. So by  (5) $\lambda' \in \Z_\Om^1$.
\end{enumerate}

From these facts we deduce that the density contribution from each of the sets $\Z, \,\, \Z_\Om^2$ to $\La$ can be at most $1$, and from $\Z_\Om^3$, at most $2$. But since $\rho(\La) = 4$, we must have $\rho(\La \cap \Z_\Om^2) =1$, and $\rho(\La \cap \Z_\Om^3) =2$. From $(6)$, it follows that $\La \cap \Z_\Om^2 = \La \cap \Z_\Om^1$, and then by (1), we get $\La \cap \Z_\Om^1 =  \Z_\Om^1 = \Z + 1/2$. Hence

$$\La = \Z \cup \Z + 1/2 \cup (\La \cap \Z_\Om^3).$$
Next, as $\Z_\Om^1 \neq \emptyset$,  $\lambda \in \La \cap \Z_\Om^3$ iff $2\lambda n,\,\,2\lambda m \in \Z + 1/2$. Let
$$2\lambda n = \frac{2k+1}{2}, \,\,\, 2\lambda m = \frac{2l+1}{2}$$
so if $j $ is the largest integer such that $2^j|n$, this is also true for $m$.

Finally we see that if $\Om$ is spectral, then $N = 2^{j+1}r +1,\,\, b= 2^{j+1}s$, with $r,s \in 2\Z+1$. Newman's theorem, applied to the set $S = \{0, 1, 2^{j+1}r +1, 2^{j+1}s\}, \,\, p=2,\,\, \alpha = 2$, ensures tiling of $\R$ by $\Om_4$. This completes the proof.

\section{\bf {Structure of the Spectrum for $n$ intervals} }

Let $\Omega =\cup_{j=1}^n [a_j,a_j +r_j),\,\, \sum_{j=1}^n r_j =1  $.  In this section we assume that $\Omega$ is a
spectral set with spectrum $\Lambda$. Recall that we may assume $0 \in \Lambda \subset \Lambda - \Lambda \subset \Z_\Omega.$

We will prove below that if, we assume that $\Lambda$  is a subset of some discrete lattice $\mathcal L$, then $\Lambda $ is rational. In one dimension, all known examples of spectra are  rational and periodic, though it is not known whether it has to be so. Given \ $\La$ has positive asymptotic density, if $\La \subset \mathcal L$, then $\La$ contains arbitrarily long arithmetic progressions by (Szemer$\grave{e}$di's theorem[S]). We now prove that in the $n$-interval case, as soon as $\La$ contains an AP of length $2n$, then the complete AP is also in $\La$. We first prove this for the set $\Z_\Om$:

\begin{proposition}\label{A}
If $\Z_{\Omega}$ contains an arithmetic progression of length $2n$ containing $0$, say $0, d, 2d, \cdots, (2n-1)d$ then
\begin{enumerate}
    \item  the complete arithmetic progression $d\Z \subset \Z_{\Omega}$,
  \item $d\in\Z$, and
  \item $\Omega$ d-tiles $\R$.
\end{enumerate}
\end{proposition}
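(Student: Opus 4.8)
The plan is to restrict the exponential sum $P(\xi):=2\pi i\xi\,\widehat{\chi_\Om}(\xi)$ to the progression $d\Z$ and exploit that it then becomes a linearly recurrent sequence of bounded order. Write $\Om=\bigcup_{j=1}^n[a_j,a_j+r_j)$, put $c_j=a_j+r_j$, and note
\[
P(\xi)=2\pi i\xi\,\widehat{\chi_\Om}(\xi)=\sum_{j=1}^n e^{2\pi i\xi c_j}-\sum_{j=1}^n e^{2\pi i\xi a_j}.
\]
Setting $u_j=e^{2\pi i d c_j}$ and $v_j=e^{2\pi i d a_j}$, let $\zeta_1,\dots,\zeta_L$ be the \emph{distinct} numbers among $u_1,\dots,u_n,v_1,\dots,v_n$; then $L\le 2n$ and, for every $m\in\Z$,
\[
s_m:=P(md)=\sum_{l=1}^L \mu_l\,\zeta_l^{\,m},\qquad \mu_l\in\Z .
\]
The hypothesis that $d,2d,\dots,(2n-1)d\in\Z_\Om$ gives $s_m=2\pi i(md)\,\widehat{\chi_\Om}(md)=0$ for $m=1,\dots,2n-1$, while $s_0=P(0)=0$ trivially; so $(s_m)$ has at least $L$ consecutive vanishing terms.

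The first step is to upgrade this to $s_m=0$ for \emph{all} $m\in\Z$. Let $Q(x)=\prod_{l=1}^L(x-\zeta_l)=x^L+q_{L-1}x^{L-1}+\dots+q_0$; since each $\zeta_l$ lies on the unit circle, $q_0=(-1)^L\prod_l\zeta_l\ne 0$. Each geometric sequence $(\zeta_l^{\,m})_{m\in\Z}$ — and hence, by linearity, $(s_m)_{m\in\Z}$ — satisfies the two-sided recurrence $x_{m+L}+q_{L-1}x_{m+L-1}+\dots+q_0x_m=0$. Starting from $L$ consecutive zeros, a forward induction yields $s_m=0$ for $m\ge 0$, and because $q_0\ne 0$ the recurrence can be solved for $x_m$ and run backwards, giving $s_m=0$ for $m\le 0$ as well. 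Thus $P(md)=0$ for every $m$, so $\widehat{\chi_\Om}(md)=P(md)/(2\pi i md)=0$ for $m\ne 0$; together with $0\in\Z_\Om$ this is exactly $d\Z\subset\Z_\Om$, proving (1).

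For (2) and (3) I would periodize: let $\Phi(x)=\sum_{k\in\Z}\chi_\Om(x+k/d)$, which is finite and $\Z_{\ge 0}$-valued for every $x$ since $\Om$ is bounded, and is $1/d$-periodic. Its Fourier coefficients with respect to the period $1/d$ are $\widehat\Phi(m)=d\,\widehat{\chi_\Om}(-md)$, which vanish for $m\ne 0$ by (1), while $\widehat\Phi(0)=d\,|\Om|=d$; hence $\Phi\equiv d$ a.e. A nonnegative integer-valued function that is a.e.\ equal to the constant $d$ forces $d$ to be a positive integer (recall $d>0$ without loss of generality), which is (2). Finally, the identity $\Phi\equiv d$ a.e.\ says precisely that the translates $\{\Om+k/d:k\in\Z\}$ cover $\R$ with constant multiplicity $d$, i.e.\ $\Om$ $d$-tiles $\R$; this is (3).

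The one genuinely nontrivial point is Step (1): recognizing that the restriction of $P$ to $d\Z$ is governed by a linear recurrence of order at most $2n$, so that $2n$ consecutive zeros propagate to the whole sequence — this is precisely why a progression of length $2n$ (and not fewer) is needed. The harmless-looking possibility that the $2n$ endpoints of $\Om$ produce fewer than $2n$ distinct frequencies $\zeta_l$ only shortens the recurrence and helps. Steps (2) and (3) are then routine Poisson-summation bookkeeping; note in particular that spectrality of $\Om$ plays no role here — all one uses is that $\Om$ is a union of $n$ intervals whose zero set $\Z_\Om$ contains a length-$2n$ arithmetic progression.
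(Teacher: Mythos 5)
Your proof is correct. Parts (2) and (3) --- periodizing $\chi_\Om$ over $(1/d)\Z$ and reading off that the resulting $1/d$-periodic, integer-valued function has all Fourier coefficients zero except the constant term $d$ --- coincide with the paper's argument. For part (1), however, you take a genuinely different route. The paper encodes the $2n$ vanishing conditions as a $2n\times 2n$ Vandermonde system annihilating the nonzero vector $(1,-1,\dots,1,-1)^t$, deduces that the Vandermonde determinant vanishes, and then proves a combinatorial lemma (Lemma 4.2) showing that among the coincidences $\zeta_i=\zeta_j$ there must be a \emph{good pair} with $i+j$ odd; iterating the reduction $n-1$ times yields a perfect matching of left endpoints with right endpoints, from which $\widehat{\chi_\Om}(kd)=0$ for all $k$ follows term by term. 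You instead observe that $m\mapsto P(md)$ is a two-sided linearly recurrent sequence of order $L\le 2n$ whose characteristic polynomial $\prod_l(x-\zeta_l)$ has nonzero constant term, so $2n$ consecutive zeros propagate to all of $\Z$ in both directions. Your argument is shorter and sidesteps the good-pair lemma entirely, but note what the paper's extra work buys: the pairing $\zeta_{2j-1}=\zeta_{2j}$ (after relabeling) is reused explicitly in the proof of Theorem 4.3 and again in Section 5, so it is an investment rather than an inefficiency. That said, the same structural fact is recoverable from your approach --- once $s_m=0$ for all $m$, linear independence of distinct geometric sequences forces each $\mu_l=0$, which is exactly the statement that the multiset of right-endpoint frequencies equals the multiset of left-endpoint frequencies --- so nothing essential is lost, it is merely left unstated.
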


\begin{proof}
Note that if $t \in \Z_\Omega $, then
$$ {\sum}_{j=1}^n  [e^{2\pi i t (a_j + r_j)} - e^{2\pi i t a_j}] \, = \,0.$$
The hypothesis says that $\widehat{\chi_{\Om}}(ld) = 0; \,\, l=1,2,...,2n-1$, hence
$$ {\sum}_{j=1}^n  [e^{2\pi i ld (a_j + r_j)} - e^{2\pi i ld a_j}] \, = \,0;\,\, l= 0,1,2,...,2n-1$$
We write $\zeta_{2j}=e^{2\pi i d a_j} \, , \, \,
  \zeta_{2j-1}= e^{2\pi i d (a_j + r_j)}; \, j=1,2,...,n$, then
the above system of equations can be rewritten as
$$\zeta_1^l-\zeta_2^l  \cdots+ \zeta_{2n-1}^l - \zeta_{2n}^l = 0; \,\, l = 0, 1, \cdots , 2n-1.$$

Equivalently,
\begin{equation}\label{2}
\left(\begin{array}{ccccc}
1 & 1 & \cdots & 1 & 1 \\
\zeta_1 & \zeta_2 & \cdots & \zeta_{2n-1} & \zeta_{2n}\\
\vdots & \vdots & \ddots & \vdots & \vdots \\
\zeta_1^{2n-2} & \zeta_2^{2n-2} & \cdots & \zeta_{2n-1}^{2n-2} & \zeta_{2n}^{2n-2} \\
\zeta_1^{2n-1} & \zeta_2^{2n-1} & \cdots & \zeta_{2n-1}^{2n-1} & \zeta_{2n}^{2n-1} \\
\end{array}\right)
\left(\begin{array}{c}
1\\ -1\\ \vdots \\ 1\\ -1\\
\end{array} \right)=
\left(\begin{array}{c}
0 \\ 0 \\ \vdots \\ 0 \\ 0\\
\end{array} \right)
\end{equation}

Since $ (1, -1,...,1, -1)^t \neq 0 $, we have

\begin{equation}\label{3}
\prod_{1=i<j}^{2n}(\zeta_i - \zeta_j)=0
\end{equation}

Next, we need a lemma.
\begin{lemma}
If (\ref{3}) holds, then there exist indices $i,\,j$ with $i \neq j$ and $i+j$ odd, such that $\zeta_i -\zeta_j =0$. We call such a pair $(\zeta_i,\zeta_j)$ a {\it good pair}. ( If $\zeta_i -\zeta_j =0$, with $i+j$ even, we say $(\zeta_i,\zeta_j)$ is a {\it bad pair}.)
 \end{lemma}
\begin{proof} Suppose not, then all solutions of (\ref{3}) are bad. Since the
solution set of (\ref{3}) is non-empty, without loss of generality let $\zeta_1 =\zeta_3$.
Then the system reduces to

\begin{equation}\label{4}
\left(\begin{array}{ccccc}
1 & 1 & \cdots & 1 & 1 \\
\zeta_2 & \zeta_3 & \cdots & \zeta_{2n-1} & \zeta_{2n}\\
\vdots & \vdots & \ddots & \vdots & \vdots \\
\zeta_2^{2n-3} & \zeta_3^{2n-3} & \cdots & \zeta_{2n-1}^{2n-3} & \zeta_{2n}^{2n-3} \\
\zeta_2^{2n-2} & \zeta_3^{2n-2} & \cdots & \zeta_{2n-1}^{2n-2} & \zeta_{2n}^{2n-2} \\

\end{array}\right)
\left(\begin{array}{c}
-\zeta_2\\ 2\zeta_3 \\ \vdots \\ \zeta_{2n-1}\\ -\zeta_{2n}\\
\end{array} \right)=
\left(\begin{array}{c}
0 \\ 0 \\ \vdots \\ 0 \\ 0\\
\end{array} \right)
\end{equation}

Hence

\begin{equation}\label{5}
\prod_{2=i<j}^{2n}(\zeta_i - \zeta_j)=0
\end{equation}

Note that any solution(good) of (\ref{5}) is a solution(good) of (\ref{3}).
Thus if (\ref{5}) has a good solution that would lead to a
contradiction. So (\ref{5}) does not have any good solution.
Repeating this $2n-2$ times we would be left with a $2\times 2$
Vandermonde matrix which is singular as
\begin{equation}\label{6}
\left(\begin{array}{cc}
1 & 1\\
\zeta_i & \zeta_j \\
\end{array}\right)
\left(\begin{array}{c}
n\zeta_i\\ -n\zeta_j \\
\end{array} \right)=
\left(\begin{array}{c}
0 \\ 0 \\
\end{array} \right)
\end{equation}
where $i$ is odd and $j$ is even. This implies that
$\zeta_i = \zeta_j $ with $i+j$  odd.
This is a  contradiction to our assumption.
\end{proof}

Now we can complete the Proof of Proposition \ref{A}.
 Observe that whenever we get a good solution of $(3)$, the system of equations in $(2)$ reduces to one involving a
$(2n-2)\times (2n-2)$ Vandermonde matrix. Now by the same arguments the reduced matrix would again
have a good solution. Repeating this process $n-1$ times, we get a partition of $\{\zeta_i\}$ into n distinct pairs
$(\zeta_i,\zeta_j)$ such that each $(\zeta_i,\zeta_j)$ is a good solution of  (3).
But then $\zeta_i^k = \zeta_j^k, \,\,\forall k \in \Z$. We can relabel the $\zeta_{2j}$'s, $j = 1,2,\cdots,n$ so that $\zeta_{2j-1} = \zeta_{2j}$. Then we get,
\begin{equation}
\widehat{\chi_\Omega}(kd)=\frac{1}{2\pi i
kd}\sum_{j=1}^n(\zeta_{2j-1}^{k}-\zeta_{2j}^k) =0;\,\, \forall \,k
\in \Z \setminus\{0\}
\end{equation}
Thus $d\Z \subset \Z_{\Omega}$.

Now consider
\begin{equation}\label{we}
F(x)=\sum_{k\in \Z} \chi_\Omega\left(x+k/d\right),\, x\in [0,1/d)
\end{equation}
Thus $F$ is $\frac{1}{d}$ periodic and integer valued
thus
\begin{equation}\label{62}
\widehat{F}(ld)= d \sum_{k \in \Z} \int_{0}^{\frac{1}{d}}
\chi_{\Omega}(x+k/d)e^{- 2\pi i ldx} dx =d
\widehat{\chi_\Omega}(ld)=d\, \delta_{l,0}
\end{equation}

Thus $F(t)=d \mbox{ a.e}$. So $ d\in \Z $ and $\Omega$  d-tiles the
real line.
\end{proof}

Using Proposition 4.1, we now prove the corresponding result for the spectrum.

\begin{theorem}
 Suppose $\Omega$ is spectral and $\Lambda$ a spectrum
with $0\in \Lambda$. If for some $a,d\in \R,\, a,a+d,...,a+(2n-1)d \in \Lambda $,
then $a+d\Z \subseteq \Lambda $. Further $d \in \Z$ and $\Om \,\,d$-tiles $\R$.
\end{theorem}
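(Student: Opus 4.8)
The plan is to reduce Theorem 4.3 to Proposition 4.1 by exploiting the translation-invariance of the spectral property together with the containment $\Lambda - \Lambda \subset \Z_\Omega$. Since $0 \in \Lambda$ and $a, a+d, \dots, a+(2n-1)d \in \Lambda$, forming differences of these elements with the first one $a$ gives $0, d, 2d, \dots, (2n-1)d \in \Lambda - \Lambda \subset \Z_\Omega$. Thus $\Z_\Omega$ contains an arithmetic progression of length $2n$ through $0$ with common difference $d$, and Proposition 4.1 applies directly: we conclude $d\Z \subset \Z_\Omega$, $d \in \Z$, and $\Omega$ $d$-tiles $\R$. This already gives conclusions (2) and (3) of the theorem and most of (1); the remaining work is to upgrade ``$d\Z \subset \Z_\Omega$'' to the statement about $\Lambda$ itself, namely $a + d\Z \subseteq \Lambda$.

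For that upgrade, the natural move is to translate the spectrum so that $0$ is one of the progression points: set $\Lambda' = \Lambda - a$, which is again a spectrum for $\Omega$ (translates of spectra are spectra), and now $0, d, 2d, \dots, (2n-1)d \in \Lambda'$. It suffices to show $d\Z \subseteq \Lambda'$. First I would note that $\rho(\Lambda') = \rho(\Lambda) = 1/|\Omega| = 1$ by Landau's theorem. Since $\Omega$ $d$-tiles $\R$ (by Proposition 4.1), we have $d \in \Z$ and the $d$-tiling means $\Z/d$ is, in a suitable sense, a natural packing lattice; more concretely, consider the function $G(x) = \sum_{\lambda \in \Lambda' \cap [0, 1/d) \text{-type fundamental domain}} \dots$ — but cleanly, the key point is that $\{e_\lambda : \lambda \in d\Z\}$ is already an orthonormal system in $L^2(\Omega)$ because all nonzero elements of $d\Z - d\Z = d\Z$ lie in $\Z_\Omega$. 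I would argue that the $d$-tiling of $\Omega$ forces $d\Z$ to be a \emph{complete} orthonormal system (a spectrum), since the tiling partition $\{\Omega + k/d\}$ gives, via the standard Fuglede-type argument, that $d\Z$ is a spectrum for $\Omega$; and then, using that $\Lambda'$ contains the orthonormal set $\{e_\lambda : \lambda \in \{0,d,\dots,(2n-1)d\}\}$ and has density exactly $1$, one shows $\Lambda' \cap d\Z$ must in fact be all of $d\Z$ — if some $kd$ were missing, the periodicity/packing structure inherited from $\Lambda' - \Lambda' \subset \Z_\Omega = \Z \cup (\text{periodic pieces})$ would contradict the density count.

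The cleanest route to $a + d\Z \subseteq \Lambda$ is probably the following: show $\Lambda'$ is $d$-periodic in the relevant sense. Since $d\Z \subset \Z_\Omega$ and hence $e^{2\pi i d x}$ is ``transparent'' on $\Omega$ up to the tiling structure, for any $\lambda \in \Lambda'$ the function $e_{\lambda + d}$ is orthogonal to $e_{\lambda'}$ for all $\lambda' \in \Lambda'$ (because $\lambda + d - \lambda' = (\lambda - \lambda') + d$, and one checks $\widehat{\chi_\Omega}$ vanishes there using $\lambda - \lambda' \in \Z_\Omega$ and $d\Z \subset \Z_\Omega$ together with the explicit product structure of $\widehat{\chi_\Omega}$ when $\Omega$ $d$-tiles — here $\widehat{\chi_\Omega}(\xi) = \widehat{\chi_{[0,1/d)}}(\xi) \cdot P(\xi)$ with $P$ a $d$-periodic trigonometric sum vanishing on $d\Z$'s cosets appropriately). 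By maximality of the orthonormal basis $\Lambda'$, this forces $\lambda + d \in \Lambda'$, so $\Lambda'$ is invariant under translation by $d$; combined with $0 \in \Lambda'$ we get $d\Z \subseteq \Lambda'$, i.e. $a + d\Z \subseteq \Lambda$.

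I expect the main obstacle to be the last step — verifying that $\lambda + d \in \Lambda'$ whenever $\lambda \in \Lambda'$. The subtlety is that $\lambda - \lambda' \in \Z_\Omega$ does not by itself give $(\lambda - \lambda') + d \in \Z_\Omega$; one genuinely needs the factorization of $\widehat{\chi_\Omega}$ coming from the $d$-tiling (Proposition 4.1's conclusion), namely that $\widehat{\chi_\Omega}(\xi)$ factors through the $d$-periodic structure, so that adding $d$ to an argument already in $\Z_\Omega$ keeps it there — except possibly at points where $\widehat{\chi_{[0,1/d)}}$ itself vanishes, i.e. at nonzero multiples of $d$, which are handled separately since those correspond exactly to the ``$d\Z$'' part we are trying to capture. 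Making this dichotomy precise (vanishing of the ``interval factor'' versus vanishing of the ``periodic factor'') is the crux; once it is set up, completeness of the basis closes the argument.
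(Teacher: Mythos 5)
Your reduction is on the right track: translating to $\Lambda_1=\Lambda-a$, noting that $0,d,\dots,(2n-1)d\in\Lambda_1\subset\Lambda_1-\Lambda_1\subset\Z_\Omega$, and invoking Proposition 4.1 correctly yields $d\Z\subset\Z_\Omega$, $d\in\Z$, and the $d$-tiling; and your plan to finish by showing that the exponentials $e_{kd}$ are orthogonal to all of $\Lambda_1$ and then appealing to completeness is exactly the right shape. But the crucial step is not carried out, and the tool you propose for it is not available. The ``$d$-tiling'' produced by Proposition 4.1 is the statement $\sum_k\chi_\Omega(x+k/d)=d$ a.e., a level-$d$ multiple tiling by $\frac{1}{d}\Z$; it does not force the endpoints $a_j$, $a_j+r_j$ into $\frac{1}{d}\Z$, and hence does not give a factorization $\widehat{\chi_\Omega}(\xi)=\widehat{\chi_{[0,1/d)}}(\xi)\,P(\xi)$ with $P$ periodic (already for a single-interval example tiling by $\Z$, such as $\Omega=[0,0.3)\cup[1.3,2)$, no such factorization holds). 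So the assertion that $(\lambda-\lambda')+d\in\Z_\Omega$ whenever $\lambda-\lambda'\in\Z_\Omega$ --- which you yourself flag as the crux --- is left unproved; moreover it is stronger than what is needed, since the theorem asks only for $a+d\Z\subseteq\Lambda$, not for $d$-periodicity of $\Lambda$.

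The paper closes this gap by a purely algebraic argument that reuses the structure extracted in the \emph{proof} of Proposition 4.1 rather than its tiling conclusion. Fix $\lambda\in\Lambda_1$. Orthogonality of $\lambda$ against $0,d,\dots,(2n-1)d$ gives $\widehat{\chi_\Omega}(kd-\lambda)=0$ for $k=0,\dots,2n-1$, i.e. $\sum_j(\xi_{2j-1}\zeta_{2j-1}^k-\xi_{2j}\zeta_{2j}^k)=0$, where the $\zeta$'s are built from $d$ and the $\xi$'s from $\lambda$. Proposition 4.1 shows that the $\zeta_i$ split into $n$ good pairs with $\zeta_{2j-1}=\zeta_{2j}$, so these $2n$ equations collapse to a Vandermonde system in the unknowns $\xi_{2j-1}-\xi_{2j}$; either all these differences vanish, or some of the $\zeta_{2j-1}$ coincide and, after grouping the repeated nodes, the corresponding partial sums must vanish. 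In either case the same expansion shows $\widehat{\chi_\Omega}(kd-\lambda)=0$ for \emph{every} $k\in\Z$, so each $e_{kd}$ is orthogonal to every $e_\lambda$ with $\lambda\in\Lambda_1$, and completeness of the orthonormal basis forces $d\Z\subseteq\Lambda_1$, i.e. $a+d\Z\subseteq\Lambda$. You would need to supply an argument of this kind --- or some other genuine proof that $kd-\lambda\in\Z_\Omega$ for all $k\in\Z$ and all $\lambda\in\Lambda_1$ --- for your proposal to be complete.
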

\begin{proof}
 Since $$  a,a+d,...,a+(2n-1)d \in \Lambda$$
Shifting $\Lambda$ by $a$ we get $\Lambda_1=\Lambda-a$ is a spectrum
for $\Omega$ and
$$0,d,...,(2n-1)d \in \Lambda_1 \subset \Lambda_1 - \Lambda_1 \subset \Z_{\Omega}.$$
Thus surely $d\Z \subset \Z_\Omega $ by Proposition \ref{A}.
Now let $\lambda \in \Lambda_1$. Then by orthogonality,
 $$-\lambda,d-\lambda,2d-\lambda,...,(2n-1)d-\lambda \in \Z_{\Omega}.$$
Let $$\xi_{2j} = e^{-2 \pi i \lambda a_j} , \xi_{2j-1} = e^{-2 \pi i \lambda(a_j+r_j)}; j=1,...,n$$
$$\zeta_{2j}=e^{2\pi i d a_j}, \zeta_{2j-1}=e^{2\pi i d(a_j+r_j)}; j=1,...,n$$

Since $\widehat{\chi_\Om}(kd-\lambda) = 0, \, k= 0,...,2n-1 $, we have
\begin{equation}\label{7}
\xi_1 \zeta_1^k - \xi_2 \zeta_2^k + ...+\xi_{2n-1} \zeta_{2n-1}^k - \xi_{2n} \zeta_{2n}^k =
 0 \mbox{ for } k=0,...,2n-1.
\end{equation}

Since the ${\zeta_i}'$s can be partitioned into n disjoint pairs
$(\zeta_i,\zeta_j)$ such that $\zeta_i=\zeta_j$ and $i+j$ odd,
 without loss of generality, we relabel the $\zeta_{2j}$'s and the corresponding $\xi_{2j}$'s  so that
$\zeta_{2j-1}=\zeta_{2j},\, j=1,2,...,n$.
Thus (\ref{7}) can be written as

\begin{equation}\label{8}
\left(\begin{array}{cccc}
1 & 1 & \cdots & 1  \\
\zeta_1 & \zeta_3 & \cdots & \zeta_{2n-1} \\
\vdots & \vdots & \ddots & \vdots  \\
\zeta_1^{n-1} & \zeta_3^{n-1} & \cdots & \zeta_{2n-1}^{n-1}  \\
\end{array}\right)
\left(\begin{array}{c}
\xi_1-\xi_2\\ \xi_3-\xi_4 \\ \vdots \\ \xi_{2n-1} -\xi_{2n}\\
\end{array} \right)=
\left(\begin{array}{c}
0 \\ 0 \\ \vdots \\ 0
\end{array} \right)
\end{equation}

\medskip
Now if $[ \xi_1 - \xi_2,\xi_3-\xi_4,...,\xi_{2n-1}-\xi_{2n}]^t $ is the trivial solution
i.e. $\xi_{2j-1}-\xi_{2j}= 0,\,  \forall \,j=1,...,n$,
then $\forall \,k \in \Z ,$
\begin{eqnarray*}
    \widehat{\chi_\Omega}(kd-\lambda) & = & \frac{1}{2\pi i (kd-\lambda)} \left[ \xi_1 \zeta_1^k - \xi_2\zeta_2^k+ \cdots+\xi_{2n-1}\zeta_{2n-1}^k -\xi_{2n} \zeta_{2n}^k \right ]\\
  & = & \frac{1}{2\pi i (kd-\lambda)} \left[\zeta_1^k (\xi_1-\xi_2) +\cdots+\zeta_{2n-1}^k (\xi_{2n-1}-\xi_{2n})\right]=0
\end{eqnarray*}
Thus $d \Z -\lambda \in \Z_\Omega$.

If however $[\xi_1-\xi_2,\xi_3-\xi_4,...,\xi_{2n-1}-\xi_{2n}]^t$ is not the trivial solution,
then $\zeta_{2l-1}=\zeta_{2k-1}$ for some $l,k \in 1,...,n ; l\neq k.$

Removing all the redundant variables and writing the remaining variables as $\eta _{2j+1}^l , \,\, j,l  = 0, 1 \cdots k-1$, we get a non-singular Vandermonde
matrix satisfying

\begin{equation}\label{22}
\left(\begin{array}{cccc}
1 & 1 & \cdots & 1 \\
\eta_1 & \eta_3 & \cdots & \eta_{2k-1} \\
\vdots & \vdots & \ddots & \vdots \\
\eta_1^{k-1} & \eta_3^{k-1} & \cdots & \eta_{2k-1}^{k-1} \\
\end{array}\right)
\left(\begin{array}{c}
\sum_1\\ \sum_3 \\ \vdots \\ \sum_{2k-1}\\
\end{array} \right)=
\left(\begin{array}{c}
0 \\ 0 \\ \vdots \\ 0 \\
\end{array} \right)
\end{equation}

where $$ {\sum}_{k} = \sum_{j:{\zeta}_{2j-1}=\eta_k} {{\xi}_{2j-1} -
{\xi}_{2j}}. $$ Then each of the  $\sum_i =0 , i=1,\cdots,k$. But
then once again $\forall p  \in  \Z$
$$\widehat{\chi_\Omega}(pd-\lambda)=\frac{1}{2\pi i (pd-\lambda)} \left[ \eta_1^p {\sum}_1 + \eta_3^p{\sum}_3+\cdots
+ \eta_{2k-1}^p{\sum}_{2k-1}\right]=0   $$ Thus $d\Z -\lambda  \in
{\Z}_{\Omega}$. We already have $d\Z \subseteq \Z_{\Omega}$ and now
we have seen if $\lambda \in \Lambda_1$, then $d\Z -\lambda \in
\Z_\Omega $. Thus $d\Z \subseteq \Lambda_1$, hence $a+d\Z \subset
\Lambda$.
\end{proof}
\medskip

\subsection{Existence of APs.}With the above Proposition it is clear that we need to explore conditions under which the spectrum, when it exists, contains arithmetic progressions.  In the following lemma, we are able to relax the condition that $\La$ be contaned in a lattice condition to a local condition. We say that a discrete set $\Delta$ is $\delta-$ separated, if $\inf\{|\lambda - \lambda'|: \lambda,\, \lambda' \in \Delta, \, \lambda \neq \lambda'\} = \delta > 0$.

\begin{lemma}.
If $\La$ is a spectrum, such that $\Gamma = \La - \La$ is $\delta$-separated, then $\La$ contains a complete arithmetic progression. Further, $\La$ is contained in a lattice with a base.
\end{lemma}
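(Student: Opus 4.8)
The plan is to exploit the structure theorem of Theorem~4.3 (and its underlying Proposition~4.1) together with the hypothesis that $\Gamma=\La-\La$ is $\delta$-separated. Since $(\Om,\La)$ is a spectral pair, Landau's density theorem gives $\rho(\La)=1/|\Om|=1$, so $\La$ has positive asymptotic density. The key point is that a $\delta$-separated set of positive upper density contains arbitrarily long arithmetic progressions. Once we know $\La$ contains an AP of length $2n$, Theorem~4.3 immediately upgrades this to a complete AP $a+d\Z\subset\La$ with $d\in\Z$, which proves the first assertion; and then since $\La$ is $d\Z$-periodic and $\delta$-separated and has density $1$, it is a finite union of cosets of $d\Z$, hence contained in the lattice $\tfrac{d}{N}\Z$ for a suitable $N$, giving the ``lattice with a base'' conclusion.

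So the main work is to produce one long AP. Here I would argue as follows. Because $\Gamma$ is $\delta$-separated, $\Gamma\cap[-R,R]$ has at most $2R/\delta+1$ points for every $R$; meanwhile $\La\cap[-R,R]$ has roughly $2R$ points, so $\La\cap[-R,R]$ is a subset of a finite set $\Gamma\cap[-R,R]$ of bounded size per unit length. Then I would invoke Szemer\'edi's theorem in the quantitative form: any subset of $\{1,2,\dots,M\}$ of density at least $c>0$ contains an AP of length $k$, once $M\ge M(k,c)$. To apply this I want to embed a large chunk of $\La$ into an arithmetic progression of integers with controlled density. The natural device is to pass to the ``combinatorial model'': since $\Gamma$ is $\delta$-separated, after rescaling by $1/\delta$ the set $\Gamma$ (hence $\La$) lies inside a $1$-separated set, i.e. at mutual distances $\ge 1$; rounding each element of $\La\cap[0,R]$ to the nearest integer is then injective, and the image is a subset of $\{0,1,\dots,\lceil R\rceil\}$ of density bounded below by a constant (coming from $\rho(\La)\delta$). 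Szemer\'edi's theorem then yields, for $R$ large, an AP of length $2n$ among the rounded points, $x_0,x_0+e,\dots,x_0+(2n-1)e$ with integer common difference $e$. I must then transfer this back: the corresponding points $\lambda_0,\dots,\lambda_{2n-1}\in\La$ satisfy $\lambda_j=x_j+\epsilon_j$ with $|\epsilon_j|<1/2$ (in the rescaled coordinates), so the differences $\lambda_{j+1}-\lambda_j$ all lie within $1$ of $e$.

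The subtle step — and the one I expect to be the main obstacle — is forcing these near-equal differences to be \emph{exactly} equal, so that we genuinely have an AP in $\La$ rather than an approximate one. This is where $\delta$-separation of $\Gamma=\La-\La$ (as opposed to merely of $\La$) does the real work: all the pairwise differences $\lambda_i-\lambda_j$ lie in $\Gamma$, so the set of achievable differences near $e$ is itself $\delta$-separated. To pin them down I would not round to a single integer grid but instead argue within $\Gamma$: one shows that the ``approximate differences'' occurring in a long near-AP are forced to take a single value. Concretely, one can pass to a sufficiently long near-AP (length far exceeding $2n$, again via Szemer\'edi with a larger $k$), and then use a pigeonhole/telescoping argument: among many consecutive near-gaps $g_j=\lambda_{j+1}-\lambda_j$, all lying in the $\delta$-separated set $\Gamma$ within a bounded window, only finitely many values are available, so some value repeats on a long sub-progression of indices; restricting to those indices and using that partial sums $\lambda_{j+k}-\lambda_j=kg$ must again lie in $\Gamma$ (hence are constrained), one cleans up to a true AP of length $2n$ in $\La$. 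Once that AP is in hand, Theorem~4.3 finishes both the completion of the AP and, via $d$-periodicity plus $\delta$-separation, the statement that $\La$ sits inside a rational lattice $\tfrac{d}{N}\Z$ with a finite base of residues.
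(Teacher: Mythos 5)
Your overall skeleton matches the paper's: discretize $\La$, apply Szemer\'edi to get a near-AP, use the $\delta$-separation of $\Gamma=\La-\La$ to make it exact, and then invoke the completion theorem. But your execution of the ``make it exact'' step has a real gap. You rescale by $1/\delta$ and round to the \emph{nearest} integer, so each gap $\lambda_{j+1}-\lambda_j$ is only pinned to a window of length $2\delta$ about the nominal common difference; a $\delta$-separated set can contain \emph{two} points of such a window, so the gaps are not forced to coincide. You sense this and propose a pigeonhole/telescoping cleanup, but as stated it does not close: pigeonhole gives a gap value that occurs many times, not on an arithmetic sub-progression of indices, and even restricting to indices sharing a common gap value does not yield an AP in $\La$ (you would need a long \emph{run} of equal consecutive gaps, or a genuine combinatorial argument on the partial sums, neither of which you supply). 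The paper avoids all of this with one small but essential choice: it rounds with the floor function to the grid $\delta_1\Z$ where $\delta_1=\delta/2$. Then each gap of the near-AP lies in an open interval of length $2\delta_1=\delta$, which contains \emph{at most one} element of the $\delta$-separated set $\Gamma$, so all the gaps are equal on the spot and the near-AP is an honest AP of length $2n$; Theorem 4.3 then completes it.

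The second conclusion is also not established by your argument. You assert that $\La$ is $d\Z$-periodic, but Theorem 4.3 only yields that \emph{one} full coset $a+d\Z$ is contained in $\La$; it does not give $\La+d=\La$. Moreover, ``contained in a lattice with a base'' means $\La\subseteq\{0,b_1,\dots,b_l\}+M_d\Z$ for a finite base, not $\La\subseteq\tfrac{d}{N}\Z$ --- the latter is strictly stronger and would require the coset representatives to be commensurable with $d$, which $\delta$-separation alone cannot give. The paper's route is different: it shows the set $\La_s$ of successive gaps of $\La$ is finite (bounded below because $\Z_\Om$ misses a neighbourhood of $0$, bounded above by Iosevich--Pedersen, and finite because the zeros of the entire function $\widehat{\chi_\Om}$ are isolated), and then observes that only finitely many non-negative integer combinations of these finitely many gap values can sum to the period $M_d$; since $M_d\Z\subseteq\La$, every element of $\La$ in a period window is one of these finitely many partial sums, giving the finite base. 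You should replace your periodicity claim with this gap-set argument (or something equivalent).
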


\begin{proof}:
The $\delta$-separability of $\Gamma$ means that for some $\delta >0$
$$ inf \{ |\gamma_j -\gamma_k| : \gamma_j,\, \gamma_k \in \Gamma, \,\, \gamma_j \neq \gamma_k \} = \delta >0 $$

Let $\delta_1 = \delta/2$ and consider the  map from $\La$ to the lattice $\delta_1 \Z$ given by
$$\psi(\lambda) = [\lambda/\delta_1] \delta_1,$$
where $[x]$ denotes the largest integer less than $x$.

Then the subset $\psi(\La)$ has positive density, hence by [S], given any N, there exists an arithmentic progression of length $N$, say
$$\psi(\lambda_1), \psi(\lambda_2), ... ,\psi(\lambda_N),$$ with $\psi(\lambda_{j+1}) =\psi(\lambda_j) + d\delta_1,\,\, d\in\Z,\,\, j= 1,2,...N-1$.
But then $\lambda_{j+1} -\lambda_j $ must lie in the interval $(d\delta_1 -\delta_1, d\delta_1 + \delta_1)$ for each $j= 1,2,...,N-1$. Since this interval has length $\delta$, all the above elements must be the same (in any interval of length $\delta $, there can be at most one element of $\La -\La$). But this means that the $\lambda_j$'s are in arithmetic progression. Now using Proposition 4.3, we get that $\La$ contains the complete AP $M_d\Z$, where $M_d = d\delta_1$, and $M_d \in \Z$.

Let $\Lambda_s := \{\lambda_{n+1}-\lambda_n | \lambda_n \in \Lambda \}$
be the set of successive differences of spectral sequences, the spectral gaps. We will show that $\Lambda_s$ is finite.
$\Lambda_s \subseteq \Lambda - \Lambda \subseteq \Z_\Omega $. As $\Omega$ is measurable and of finite measure there exists a neighbourhood
 around $0$ which does not
intersect $\Z_\Omega$. Thus $\Lambda_s$ is bounded below . But $\Lambda_s$ is bounded above [see IP].
So by the compactness of $\Omega$ we get that $\Lambda_s$ is finite as
$\widehat{\chi_\Omega}$ can be extended
analytically to the entire complex plane and zeros of an entire function are isolated.

Let $\Lambda_s = \{r_1,r_2,...,r_k\}$.
The set of solutions for
$\sum_{i=1}^{k} a_i r_i = M_d$ with $a_i \in \N \cup \{0\}$ is finite. Thus $\Lambda \subseteq \{0,b_1,...,b_l\}+ M_d\Z$ for some $l$.
In other words, $\Lambda$ is contained in a lattice with a base.
So we have $M_d\Z \subseteq \Lambda \subseteq \{0,b_1,...,b_l\}+ M_d\Z$
\end{proof}

We end this section by showing  that the hypothesis $\La -\La \subseteq \mathcal L$ gives more information on the spectrum.
\begin{theorem}
If $\Omega$ as above is spectral,
with spectrum $\Lambda$ such that
$0\in \Lambda \subset \Lambda-\Lambda \subset \mathcal{L}$, $\mathcal{L}$
a lattice, then $\Lambda$ is rational.
\end{theorem}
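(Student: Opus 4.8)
The plan is to leverage Lemma 4.4 and Theorem 4.3 to reduce the rationality of $\Lambda$ to the rationality of a single arithmetic progression step and a base. Since $\Lambda - \Lambda \subset \mathcal{L}$, a lattice, $\mathcal{L}$ is $\delta$-separated for some $\delta > 0$, so Lemma 4.4 applies: $\Lambda$ contains a complete AP $M_d\Z$ with $M_d \in \Z$, and moreover $\Lambda \subseteq \{0, b_1, \dots, b_l\} + M_d\Z$ for some finite base. By Theorem 4.3, since $0, d, 2d, \dots, (2n-1)d$ lie in the AP, we already know $d = M_d \in \Z$ and $\Omega$ $d$-tiles $\R$. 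So the only thing left to establish is that each base point $b_i$ is rational.

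First I would observe that each $b_i \in \Lambda \subset \Lambda - \Lambda \subset \Z_\Omega$, so $\widehat{\chi_\Omega}(b_i) = 0$, i.e. $\sum_{j=1}^n [e^{2\pi i b_i(a_j + r_j)} - e^{2\pi i b_i a_j}] = 0$. The key extra leverage is that $\Omega$ $d$-tiles $\R$ with $d \in \Z$: the tiling partition has a rational structure. Concretely, translating so $0 \in \Lambda$ and using that $M_d\Z \subseteq \Lambda$, the tiling complement $\T$ (with $\Omega + \T$ a partition a.e.) can be taken inside a lattice $\frac{1}{d}\Z$-type structure; more precisely, since $F(x) = \sum_{k \in \Z} \chi_\Omega(x + k/d) = d$ a.e. from Proposition 4.1, the set $\Omega$ restricted to a period cell $[0, 1/d)$ tiles that cell by a finite set of $\Z$-translates scaled by $1/d$, forcing the fractional parts of the endpoints $a_j, a_j + r_j$ (mod $1/d$) to be rational. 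This pins down the $a_j$ up to rational numbers; combined with $\sum r_j = 1$ one gets all $a_j, r_j \in \Q$ after a suitable normalization.

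With the $a_j$ and $r_j$ rational, say with common denominator $q$, the equation $\sum_{j=1}^n [e^{2\pi i b_i(a_j + r_j)} - e^{2\pi i b_i a_j}] = 0$ becomes a vanishing sum of powers of $e^{2\pi i b_i / q}$. Writing $z = e^{2\pi i b_i/q}$, this says $z$ is a root of a fixed nonzero polynomial with integer coefficients (the "structure polynomial" of $\Omega/\frac{1}{q}\Z$), and moreover $|z| = 1$. But one can say more: since $M_d\Z \subseteq \Lambda$, for the orthogonality $\langle e_0, e_{b_i} \rangle = 0$ to be compatible with the periodic structure, $b_i$ must interact correctly with the finite quotient group $\frac{1}{q}\Z / d\Z$, which is a finite cyclic group; an element $b_i \in \Z_\Omega$ that pairs nontrivially with this finite group and is forced by periodicity to act as a character on it must be rational. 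In other words, $z = e^{2\pi i b_i / q}$ is a root of unity of order dividing $dq$, hence $b_i \in \frac{1}{d}\Z$, so $b_i$ is rational.

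The main obstacle I anticipate is the middle step: rigorously extracting rationality of the endpoints $a_j, r_j$ from the $d$-tiling conclusion. Knowing $F \equiv d$ a.e. gives a tiling of the circle $\R/\frac{1}{d}\Z$ by the image of $\Omega$, but the tiling complement could a priori be an irrational set; one needs to argue that because $\Omega$ is a finite union of intervals, any tiling of a bounded interval by translates of $\Omega$ has rational translation set (this is essentially a statement about tilings of intervals by a finite union of intervals, where the "spectrum of lengths" forces commensurability). Once commensurability of all endpoints is in hand, the vanishing-sum-of-roots-of-unity argument is routine and the rationality of the base points $b_i$ — hence of all of $\Lambda$ — follows cleanly.
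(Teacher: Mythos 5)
There is a genuine gap, and it occurs precisely at the step you yourself flagged as the main obstacle. Your claim that the level-$d$ tiling $F\equiv d$ a.e.\ forces the endpoints $a_j, a_j+r_j$ to be rational (mod $1/d$) is false. The constancy of $F(x)=\sum_k \chi_\Omega(x+k/d)$ only says that the multiset of up-jump positions $\{a_j \bmod 1/d\}$ coincides with the multiset of down-jump positions $\{(a_j+r_j) \bmod 1/d\}$; this is a system of cancellation relations and is perfectly compatible with irrational endpoints. For instance $\Omega=[0,1/2)\cup[\alpha,\alpha+1/2)$ satisfies $\sum_k\chi_\Omega(x+k/2)=2$ for \emph{every} real $\alpha$. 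Since the theorem does not (and cannot) assert rationality of the endpoints of $\Omega$, any route through ``the endpoints are rational'' is doomed. The subsequent step --- that $b_i$ ``must act as a character on a finite cyclic group'' and is therefore rational --- is not an argument either: even granting rational endpoints, a unimodular root of an integer polynomial need not be a root of unity without additional input.

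The fix is much simpler than what you attempted, and it is the paper's entire proof: write $\mathcal{L}=\theta\Z$, so that $0\in\La\subset\La-\La\subset\theta\Z$ already places every element of $\La$ in $\theta\Z$; rationality of $\La$ therefore reduces to rationality of the single number $\theta$. By Szemer\'edi, $\La$ contains an AP of length $2n$, whose common difference is $K\theta$ for some nonzero integer $K$ (differences of elements of $\theta\Z$ lie in $\theta\Z$), and by Proposition 4.1/Theorem 4.3 that common difference is an integer. Hence $\theta=d/K\in\Q$ and $\La\subset\theta\Z\subset\Q$. Your machinery from Lemma 4.4 (the base $\{0,b_1,\dots,b_l\}+M_d\Z$) is unnecessary here: the base points are already in $\theta\Z$ and inherit rationality for free once $\theta\in\Q$ is known.
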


\begin{proof}:
Let $\mathcal{L} = \theta \Z$, and suppose $\Lambda \subseteq \theta \Z$.
Since $\Lambda$ is a spectrum it has asymptotic density 1.
By Szemer$\grave{e}$di's theorem $\Lambda$ contains arbitrarily long arithmetic progressions, and so in
particular, of length 2n. Without loss of generality, suppose
$$0,K\theta,...,(2n-1)K\theta \in \Lambda \subseteq \Z_\Omega$$
where $K \in  \Z$. We already know that if $\Lambda$ contains an arithmetic
progression of length $2n$ then the complete arithmetic progression is in $\Lambda$
and the common difference $d \in \N$. Hence $K\theta \in \Z$, which means that $\theta \in \Q$. So $\Lambda \subseteq \Q$.
\end{proof}

\section{Three Intervals: Spectral implies Tiling}

Consider now the particular case of three intervals. Let $\Omega = [0,r_1) \cup [a_2,a_2 +r_2) \cup [a_3 +r_3),\,\, r_1 +r_2 +r_3 = 1$.
Suppose that $\Omega $ is spectral and that its spectrum $\Lambda$ contains an arithmetic progression of length 6
and common difference d. By translating if
necessary we may assume that $d\Z \subset \Lambda , \,\, d \in \Z$, and so by Proposition 4.1, $\Om \,\, d-$tiles $\R$. Thus,
if $d=1, \,\, \Omega$ tiles $\R$ by $\Z$.
If $d\neq 1 $, let $ \lambda \in \Lambda\setminus d\Z $. Put
$$\zeta_{2j-1}=e^{2\pi i d(a_j+r_j)},\zeta_{2j}=e^{2\pi i d a_j}$$
$$\xi_{2j-1}=e^{2\pi i\lambda (a_j+r_j)},\xi_{2j} =e^{2\pi i \lambda a_j},$$
for $j = 1, 2, 3$. Now since $\lambda, \,\, 0,d,\cdots , 5d \in \La$, by orthogonality we get,
$$\xi_1\zeta_1^k - \xi_2\zeta_2^k + \cdots + \xi_{2n-1}\zeta_{2n-1}^k - \xi_{2n}\zeta_{2n}^k = 0$$
for $k = 0,1,\cdots,5$.

We know that among the $\zeta_j$'s there are good
pairs. We can reindex the $\zeta_{2j}$'s and simultaneously the corresponding $\xi_{2j}$'s such that $\zeta_{2j-1}=\zeta_{2j}$. Then we have

\begin{equation}\label{223}
\left(\begin{array}{ccc}
1 & 1 & 1 \\
\zeta_1 & \zeta_3 & \zeta_5 \\

\zeta_1^2 & \zeta_3^2 & \zeta_5^2 \\
\end{array}\right)
\left(\begin{array}{c}
\xi_1 -\xi_2 \\ \xi_3-\xi_4 \\ \xi_5-\xi_6\\
\end{array} \right)=
\left(\begin{array}{c}
0 \\ 0 \\  0 \\
\end{array} \right)
\end{equation}

Let $A$ denote the linear transformation represented by the above matrix. We consider three cases:

\medskip

{\bf Case 1}. Rank A=3.
In this case, $\xi_1=\xi_2,\, \xi_3=\xi_4$ and $\xi_5=\xi_6$.
But then it follows that $\Lambda$ is a group. Further, $\lambda \in \La$ implies $n\lambda \in \La \,\, \forall \,\, n\in \Z$. This means that $\Om \,\, d-$tiles $\Z$, so infact, $\lambda \in \Z$, i.e. $\La = k\Z$ for some $k \in \N$. But $\Lambda$ has density $1$, we must have $\Lambda = \Z$.

\medskip

{\bf{Case 2}}. Rank A=2.
In this case, without loss of generality, assume that $\zeta_1=\zeta_5$, then

\begin{equation}\label{224}
\left(\begin{array}{cc}
1 & 1  \\
\zeta_1 & \zeta_3  \\
\end{array}\right)
\left(\begin{array}{c}
\xi_1 -\xi_2+\xi_5-\xi_6 \\ \xi_3-\xi_4 \\
\end{array} \right)=
\left(\begin{array}{c}
0 \\ 0 \\
\end{array} \right)
\end{equation}
\\
Hence $$\xi_1-\xi_2+\xi_5-\xi_6=0 \mbox{  and  } \xi_3-\xi_4=0$$
It follows that in this case
$$\Z_\Omega=\Z_{\Omega}(1)\cup\Z_{\Omega}(2)\cup\Z_\Omega(3)$$
where
$$\Z_{\Omega}(1)=\{\lambda:\xi_3=\xi_4,\xi_1=\xi_2,\xi_5=\xi_6\}$$
$$\Z_{\Omega}(2)=\{\lambda:\xi_3=\xi_4,\xi_1=\xi_6,\xi_2=\xi_5\}$$
$$\Z_{\Omega}(3)=\{\lambda:\xi_3=\xi_4,\xi_1=-\xi_5,\xi_2=-\xi_6\}.$$

We proceed as in ([L1]) to conclude that
\begin{enumerate}
\item Either $ \Lambda \subset \Z_{\Omega}(1)\cup \Z_{\Omega}(3)$
or $\Lambda \subset \Z_{\Omega}(2)\cup \Z_{\Omega}(3)$,
\item $\Lambda \subset \Lambda-\Lambda \subset
 \Z_{\Omega}(3) \cup (\Z_{\Omega}(1)\cap \Z_{\Omega}(2))$
\item $\Z_{\Omega}(1)\cap \Z_{\Omega}(2)=k\Z, \,\, k \in \Z$, since this set is a subgroup of  $ \Z$, and
\item Let $\lambda_1,\, \lambda_2 \in \La \cap \Z_\Om(3)$, then $\lambda_1 -\lambda_2 \in (\Z_\Om(1) \cap \Z_\Om(2)) \setminus \Z_\Om(3).$
\end{enumerate}

It follows that $\Lambda \cap \Z_{\Omega}(3) \subseteq \beta+k\Z$. Then by density considerations, and Theorem 4.3, we get that $ \Lambda = k\Z \cup (k\Z+\beta)$, with $ k=2$. Then {\it Spectral implies Tiling} follows from [P2], where it is proved that if the spectrum is a union of two lattices, then $\Om$ tiles $\R$.

\medskip
{\bf Case 3}. Rank A=1\\
In this case,  $\zeta_1=\zeta_3=\zeta_5=\zeta_2=\zeta_4=\zeta_6$. Then
$$ e^{2 \pi i d a_1}= e^{2 \pi i d (a_1+r_1)}=e^{2 \pi i d a_2}
=e^{2 \pi i d (a_2+r_2)}= e^{2 \pi i d a_3}= e^{2 \pi i d (a_3+r_3)}.$$
Taking $a_1 = 0$ we get
$$ a_1 = 0,a_2=\frac {l_2}{d}, a_3=\frac{l_3}{d} ; r_1=\frac {k_1}{d}, r_2=\frac{k_2}{d},
r_3=\frac{k_3}{d}$$
where $l_2,l_3,k_1,k_2,k_3 \in \Z $ and $k_1 + k_2 + k_3 = d$.

It follows from this that we are in the case of $d$ equal intervals, in three groups.
But then  the spectrum is periodic and of the form
$$\Lambda = L + d\Z$$
If $d=1$ then $\Lambda = \Z$, so there is nothing to prove. If $d =2$, then $r_1,r_2,r_3 \geq 1/2$, which is not possible. Hence, we may assume that $d \geq 3$. We beleive that $d =3$ in this case, but we are unable to prove this. However, if we make a further assumption such as $\La \subset \mathcal L$, a lattice, or that $\La \subset \Q$, then we are led to questions of vanishing sums of roots of unity given below.
Let $\La \subset \Q$
$$e^{2\pi i \lambda a_1} -1 +
 e^{2\pi i \lambda (a_2+r_2)} -e^{2\pi i \lambda a_2} +
 e^{2\pi i \lambda (a_3+r_3)}-e^{2\pi i \lambda a_3} = 0$$
 and this is a case of six roots of unity summing to $0$, say
$\alpha_1+\cdots+\alpha_6 = 0$. Poonen and Rubinstein [PR] ( have classified all minimal vanishing sums of roots of unity
$\alpha_1+\cdots+\alpha_n = 0$ of weight $n\leq 12$ (see also[LL]).
There are three  possible ways in which six roots of unity $(\alpha_1, \alpha_2, ... , \alpha_6)$ can sum up to zero. If $\sigma $ denotes an element of $S_6$, the group of permutations of 6 objects, the possible cases are:

\begin{enumerate}
\item[(T1)] ( a $2$ sub-sum $=0$). $\alpha_{\sigma(1)}+\alpha_{\sigma(2)}= \alpha_{\sigma(3)}+\alpha_{\sigma(4)}= \alpha_{\sigma(5)}+\alpha_{\sigma(6)}=0$.
\item[(T2)] (a $3$ sub-sum $=0$). $\alpha_{\sigma(1)}+\alpha_{\sigma(2)}+\alpha_{\sigma(3)}= \alpha_{\sigma(4)}+\alpha_{\sigma(5)}+\alpha_{\sigma(6)}=0$.

\item[(T3)] (no sub-sum $=0$). $\alpha_{\sigma(n)}=\rho^n ;\,\, n=1,\cdots,4 ; \,\,\alpha_{\sigma(5)}=-\omega ,\alpha_{\sigma(6)}=-\omega^2$ (after normalizing ) where $\rho$ is a fifth root of unity and $\omega $ is a cube root of unity.
\end{enumerate}

It turns out that there are now many possibilities and we use a symbolic computation explained in the next section.

\section {A symbolic computation using Mathematica}

We begin with the setting of Case 3 in the previous section, where Rank$ A = 1$. Let $d$ be the smallest positive integer in $\Z_\Om$ such that $d\Z \subseteq \La$, and let $\La = L + d\Z = \cup_{j=1}^{d} \{\lambda_j + d\Z\}$, with $\lambda_1 = 0$. We also have $ d \geq 3$. Observe that if $d' \in \Z_\Om$ is such that $e^{2 \pi i d' a_j}= e^{2 \pi i d' (a_j+r_j)} = 1$ for all $j \in \{1,2,3\}$, then $d' \in d\Z$.
\medskip

We now introduce some notation and explain the analysis behind  the computation carried out. First we map $\Z_\Om$ into $\C^6$;
$ \lambda\rightarrow v_\lambda=(e^{2 \pi i \lambda (a_1 + r_1)},- e^{2 \pi i \lambda a_1},e^{2 \pi i \lambda(a_2 + r_2)},-e^{2 \pi i \lambda a_2}, e^{2 \pi i \lambda (a_3 + r_3)},-e^{2 \pi i \lambda a_3})$.
In particular $v_0 = (1, -1, 1, -1, 1, -1)$
\medskip

Define a conjugate bilinear form on $\C^6$ as follows. For $v = (x_1,x_2,...,x_6)$, $ w = (y_1,y_2,...,y_6)$, (skew dot product)
$$ SDP(v,w) = x_1 \bar y_1 - x_2 \bar y_2 + x_3 \bar y_3 - x_4 \bar y_4 + x_5 \bar y_5 - x_6 \bar y_6 $$
Note that $SDP(v_\lambda ,v_0) = 0 \,\, \forall \lambda \in \Z_\Om$. Let
$$G(v,w) = (x_1 \bar y_1, - x_2 \bar y_2,  x_3 \bar y_3, - x_4 \bar y_4 , x_5 \bar y_5, - x_6 \bar y_6)$$
We will say that a vector $v_\lambda $ is of Type1, Type2 or Type3 if it satisfies (T1), (T2) or (T3) respectively, listed at the end of the last section.

\medskip
We make some observations
\begin{enumerate}
    \item $SDP(v_\lambda, v_0) = 0 ,\,\,\forall \lambda \in \Z_\Om$.
    \item $G(v_{\lambda_1},v_{\lambda_2}) = v_{\lambda_1 -\lambda_2}$.
    \item If $\lambda \subset \Q$, then all components of $v_{\lambda}, \,\, \lambda \in \lambda$ are roots of unity.
    \item Since $a_1 = 0$, the second coordinate in the image of $\Z_\Om$ in $\C^6$ is always $-1$.
    \item The image of $\La$ in $\C^6$ consists of precisely $d$ elements corresponding to the different cosets of $d\Z$ (for, if $v_{\lambda_1} =v_{\lambda_2}$, then $G(v_{\lambda_1}, v_{\lambda_2}) = v_0$, so $\lambda_1 -\lambda_2 \in d\Z$.
\end{enumerate}
 \medskip

The computation is done under the following assumption:

\medskip

{\bf Assumption:  For $\lambda_1,\, \lambda_2 \in \La, \,\,  v_{\lambda_1 - \lambda_2}$ is of Type 1  if and only if $\lambda_1 -\lambda_2 \in d\Z$}

Through the symbolic computation we shall investigate the maximum possible cardinality of a set $\{\lambda_1, \lambda_2,...,\lambda_d\}$ such that $\lambda_i -\lambda_j \in \Z_\Om$  and such that $v_{\lambda_i -\lambda_j}$ is a vector of either Type 2 or Type 3.
\medskip

The first case of this investigation is analyzed below.

 {\bf Case 1.} $v_{\lambda_1}, \, v_{\lambda_2} $ are both Type 2 vectors.
\medskip
First we make some observations.

1. if $(\alpha_1,\alpha_2,...,\alpha_6)$ are 6 roots of unity such
that their sum is zero and if $\alpha_{\sigma(1)}+\alpha_{\sigma(2)}+\alpha_{\sigma(3)}=0 $ then
$\frac{\alpha_{\sigma(1)}}{\alpha_{\sigma(2)}}$,
$\frac{\alpha_{\sigma(2)}}{\alpha_{\sigma(3)}}$,
$\frac{\alpha_{\sigma(3)}}{\alpha_{\sigma(1)}}$ are powers of
$\omega$ where $\omega^3=1$.\\

2. if $(\alpha_1,\alpha_2,...,\alpha_6)$ is as above and has no subsum
zero i.e. it is a Type3 vector, then it has to be a permutation of
$(x \rho, x \rho^2,x \rho^3, x \rho^4,-x \omega,\\ -x \omega^2)$ where
$x$ is some root of unity. So there cannot be two pairs among these six
elements whose ratios are powers of $- \omega$.\\

3. If the vector $(\pm \omega^{\ast},\pm \omega^{\ast},\pm
\omega^{\ast},\pm \omega^{\ast},\pm \omega^{\ast},\pm
\omega^{\ast})$ where $\omega^{\ast}$ is some power of
 $\omega,$ with exactly 3 positive signs and 3 negative signs has sum
 zero then it has to be of Type1, because Type2 would imply that
 it is a permutation of $(1, \omega, \omega^2, -1, -\omega,
 -\omega^2 )$ which is a Type1 vector.\\

 4. For a Type2 vector all elements in a 3 subsum adding up to zero
 have the same sign.

\medskip
 Let $u[x]=(1, \omega, \omega^2, x, x\omega, x\omega^2) $ and
     $u[y]=(1, \omega, \omega^2,y , y\omega, y\omega^2) $ be two
     vectors of Type2 where $x$ and $y$ are roots of unity. Take the
     conjugate SDP of these two vectors by permuting and conjugating
     the second vector $u[y]$. The terms in the conjugate SDP,
     ignoring the signs, will fall into one of the following four
     categories upto a permutation of the first 3 elements and last
     3 elements.

\medskip
 1. $(\omega^\ast,\omega^\ast,\omega^\ast, x \bar{y} \omega^\ast, x
 \bar{y} \omega^\ast, x \bar{y} \omega^\ast)$

 2. $( \bar{y} \omega^\ast,
 \bar{y} \omega^\ast,  \bar{y} \omega^\ast, x \omega^\ast,x \omega^\ast,x \omega^\ast)$

3. $(\omega^\ast,\omega^\ast,\bar{y} \omega^\ast, x \omega^\ast, x
\bar{y} \omega^\ast, x
 \bar{y} \omega^\ast)$

4. $(\omega^\ast,\bar{y} \omega^\ast,\bar{y} \omega^\ast, x
\omega^\ast, x \omega^\ast, x
 \bar{y} \omega^\ast)$

\medskip
Here $\omega^\ast$ represents some power of $\omega$. The first case
arises when we take conjugate SDP of u[x] and permuted u[y] which is
of the type $(\omega^\ast,\omega^\ast,\omega^\ast, y \omega^\ast, y
\omega^\ast, y \omega^\ast)$. The second case is similar. The third case arises when we take the conjugate SDP
of $u[x]$ and permuted $u[y]$ which is of the type $(\omega^\ast,
\omega^\ast, y \omega^\ast, \omega^\ast, y \omega^\ast,y
\omega^\ast)$ upto a permutation of the first 3 elements and last 3
elements. The fourth case is again similar.

\medskip
In all cases, after putting in the signs, they are not of type 3, as
there are atleast two pairs whose ratios are powers of $-\omega$.

\medskip
Now we need to consider only two cases.

\medskip
Case 1. $(\omega^\ast,\omega^\ast,\omega^\ast, x \bar{y}
\omega^\ast, x \bar{y} \omega^\ast, x \bar{y} \omega^\ast)$

If there is a 3 subsum being zero, after the signs are put in
appropriately, which involves $\omega^\ast$ and $x \bar{y}
\omega^\ast$ then the ratio $(x \bar{y} \omega^\ast / \omega^\ast) =
x \bar{y}$ is a power of $ - \omega$. Hence the set
$(\omega^\ast,\omega^\ast,\omega^\ast, x \bar{y} \omega^\ast, x
\bar{y} \omega^\ast, x \bar{y} \omega^\ast)$ becomes $(\pm
\omega^{\ast},\pm \omega^{\ast},\pm \omega^{\ast},\pm
\omega^{\ast},\pm \omega^{\ast},\pm \omega^{\ast})$ with exactly 3
positive signs and 3 negative signs. So the terms in this SDP form a
Type1 vector. Hence this possibility is not considered. So if there
is a 3 subsum being zero then it should be
$\omega^{\ast}+\omega^{\ast}+\omega^{\ast}=0$. and $x \bar{y}
\omega^\ast+ x \bar{y} \omega^\ast+ x \bar{y} \omega^\ast=0$. The 3
pluses occur with first 3 elements and 3 minuses occur with the last
3 elements or vice versa. Once u[x] is fixed with these signs $(1,
\omega, \omega^2, y, y \omega, y \omega^2)$ can be permuted in the
first 3 and last 3 elements. In this case x and y can be any root of
unity other than $-1,-\omega,-\omega^2$.\\

Case 3. $(\omega^\ast,\omega^\ast,\bar{y} \omega^\ast, x
\omega^\ast, x \bar{y} \omega^\ast, x
 \bar{y} \omega^\ast)$

 If there is a 3 subsum being zero after the signs are put in
 appropriately, then the 3 subsum which involves $\omega^\ast$ has
 to involve one of the terms $\bar{y} \omega^\ast, x \omega^\ast, x
 \bar{y} \omega^\ast$. So either $x$ is a power of $-\omega$ or $y$
 is a power of $-\omega$ or $x \bar{y}$ is a power of $- \omega$. If
 $x \bar{y}$ is a power of $- \omega$ then both $
 \bar{y}\omega^\ast, x \omega^\ast$ will also be involved in a 3
 subsum which has $\omega^\ast$. So both $x$ and $y$ are powers of
 $- \omega$. Hence the set $(\omega^\ast,\omega^\ast,\bar{y} \omega^\ast, x
\omega^\ast, x \bar{y} \omega^\ast, x
 \bar{y} \omega^\ast)$ becomes $(\pm
\omega^{\ast},\pm \omega^{\ast},\pm \omega^{\ast},\pm
\omega^{\ast},\pm \omega^{\ast},\pm \omega^{\ast})$  with exactly 3
positive signs and 3 negative signs. So the terms in this SDP form a
Type1 vector. Hence this possibility is not considered. Hence
either $x$ is a power of $- \omega$ or $y$ is a power of $- \omega$
but not both.\\

If $x$ is a power of $- \omega$ then $x$ is a power of $\omega$ as
$u[x]$ is a Type2 vector and the 3 subsum would be
$\omega^\ast+\omega^\ast+\omega^\ast=0$. and $\bar{y} \omega^\ast +
\bar{y} \omega^\ast+ \bar{y} \omega^\ast = 0$. The 3 pluses occur
with elements which involve $\bar{y}$ and 3 minuses occur with the
other 3 elements or viceversa. So, keeping $u[y] =
(1,\omega,\omega^2,y, y \omega, y \omega^2)$ fixed, the 3 pluses
occur with first 3 elements and 3 minuses occur with the last three
elements or viceversa. The vector $u[x]=(1,\omega,\omega^2,x, x
\omega, x \omega^2)$ with $x= \omega^\ast$ is itself a permutation
of $(1,\omega,\omega^2,1, \omega, \omega^2)$. Now $u[x]$ has to be
permuted in such a way that the terms in the SDP which involve
$\bar{y}$ have to be a permutation of $\pm (\bar{y}, \bar{y} \omega,
\bar{y} \omega^2)$. Hence permuted $u[x]$ has to be of the form $(
\sigma(1), \sigma(omega), \sigma(\omega^2), \mu(1), \mu(\omega),
\mu(\omega^2))$ where $\sigma$ and $\mu$ are permutations of
$(1,\omega,\omega^2)$. In this case $y$ can be any root of unity
other than $ -1, - \omega, -\omega^2$.\\

The analysis when y is a power of $- \omega$ is identical.

So all the cases where the two Type2 vectors $u[x]$ and $u[y]$ can
 have conjugate SDP zero and the terms in the SDP forms a Type2
 vector, reduce to the case where the 3 pluses occur at the first 3
 positions and 3 minuses occur at the last 3 positions or viceversa,
 with $u[x]$ fixed and $u[y]$ permuted among the first 3 positions
 and last 3 positions or where all the permuted $y$ terms in $u[y]$
 occur in the first 3 positions and other permuted 3 terms occur at
 the last 3 positions, as given below

\begin{equation}
\left(\begin{array}{cccccc}
+1 & +1 & +1 & -1 & -1 & -1\\
1 & \omega & \omega^2 & x & x\omega & x \omega^2\\
\sigma(1) & \sigma(\omega) & \sigma(\omega^2) & y \mu(1) & y \mu(\omega) & y\mu(\omega^2)\\
\end{array} \right)
\end{equation}

 It follows that if the spectrum contains only vectors of Type1 and Type2, then $d\leq 3$. This follows from the computation given at the end of\\
http://www.imsc.res.in/ $\sim $ rkrishnan/FugledeComputation.html - \\
 Type2withType2.nb
 \bigskip

Now in the case when $\La$ contains one vector of Type3, Two cases can arise:

1. $\La$ contains only vectors of Type1 and Type3, then from the Mathematica symbolic computation, available in the file\\ http://www.imsc.res.in/$\sim $rkrishnan/FugledeComputation.html - \\
Type3withType3.nb
we conclude again the $d = 3$.

\medskip
2. Lastly if $\La$ contains vectors of Type 3 as well as of Type 2, then there can be at most one coset coming from each type. The details of this computation are available at\\ http://www.imsc.res.in/$\sim $rkrishnan/FugledeComputation.html - \\
Type3withType2.nb and vwithuandv1.nb

\medskip
We conclude that $d =3$ in all the above cases, which reduces to the case of three equal intervals, for which we have already proved the conjecture.

 We have thus proved the spectral implies tiling part of Fuglede's conjecture for three intervals under two assumptions, namely (a)
$\La$ is contained in a lattice, and (b) $(\La -\La) \cap$ Type1 $= d\Z$.

\medskip
{\bf Final Remark.} Note that the additional assumptions made on the spectrum are used to reduce the Rank $A =1$ case to the case of three equal intervals.  However, without any additional assumptions on $\Lambda$, this case still corresponds to an equal interval case grouped together in three bunches.

\bigskip
{\bf Acknowledgement.} The first author would like to thank Biswaranjan Behera for useful discussions during the initial stages of this work.
\bigskip

\end{document}